\newtheorem{theorem}{Theorem}[section]
\newtheorem{proposition}{Proposition}[section]
\newtheorem{lemma}{Lemma}[section]
\theoremstyle{definition}
\newtheorem{example}[theorem]{Example}
\theoremstyle{remark}
\newtheorem{remark}[theorem]{Remark}
\newtheorem{corollary}[theorem]{Corollary}
\numberwithin{equation}{section}
\newcommand{\ldim}{\underline{\dim}_{B}}
\newcommand{\udim}{\overline{\dim}_{B}}
\begin{document}

\title[Dimension theory approach to the complexity of a.p. trajectories]
{Dimension theory approach to the complexity of almost periodic trajectories}

\author{Mikhail Anikushin}
\address{Department of
	Applied Cybernetics, Faculty of Mathematics and Mechanics,
	Saint-Petersburg State University, Saint-Petersburg, Russia.}
\email{demolishka@gmail.com}
\thanks{This work is supported by the German-Russian
	Interdisciplinary Science Center (G-RISC) funded by the German Federal
	Foreign Office via the German Academic Exchange Service (DAAD) (Proposal M-2017a-5).}



\date{June, 2017 and, in revised form, July, 2017.}

\dedicatory{In memory of Professor V. V. Zhikov}

\keywords{Almost periodic function, Dimension theory, Diophantine approximation, Evolution equation}

\begin{abstract}
We introduce and study a dimensional-like characteristic of an uniformly almost periodic function, which we call the Diophantine dimension. By definition, it is the exponent in the asymptotic behavior of the inclusion length. Diophantine dimension is connected with recurrent and ergodic properties of an almost periodic function. We get some estimates of the Diophantine dimension for certain quasiperiodic functions and present methods to investigate such a characteristic for almost periodic trajectories of evolution equations. Also we discuss the link between the presented approach and the so called effective versions of the Kronecker theorem.
\end{abstract}

\maketitle

\specialsection*{INTRODUCTION}
Methods to investigate dimension-like properties (for example, fractal or Hausdorff dimension) for almost periodic solutions are badly developed\footnote{Here we mean the study of such properties only for the solutions themselves, and not for entire attractors. There are well-known papers like \cite{Chepyzhov1994} where the fractal or Hausdorff dimension of non-autonomous attractors are estimated and in various examples the right-hand side is considered to be almost periodic.}. The starting point in dimension theory of almost periodic functions is the paper of M. L. Cartwright \cite{Cartwright1967} where she studied a link between the topological dimension and the number of frequencies of an almost periodic flow. An extension of such an approach for certain delay differential equations and partial differential equations was performed by J. Mallet-Paret in \cite{Mallet-Parret1976}. Later, K. Naito studied the fractal dimension of abstract almost periodic orbits \cite{Naito1996} and almost periodic attractors of a reaction diffusion system \cite{Naito1997}, assuming some Diophantine conditions on the frequencies. These conditions make it possible to give an upper bound for the inclusion length of almost periods (which defines the "almost periodicity" property). In terms of the inclusion length an upper estimate of the fractal dimension of an almost periodic orbit can be given. But actually, the fractal dimension does not depend on the Diophantine properties of the frequencies. Despite this, the inclusion length gives rise to the notion of a recurrent dimension, which we call the \textit{Diophantine dimension}. We note here, that later papers of K. Naito are dedicated to the study of recurrent dimensions of discrete in time almost periodic orbits with frequencies satisfying some approximating properties similar to the Diophantine condition (see next sections). And we have to note, that the definition of Diophantine dimension is contained in one of his papers \cite{Naito2001} (there it is called periodically recurrent dimension), but he did not study the properties of the Diophantine dimension explicitly. However, ideas presented in papers \cite{Naito1982} and \cite{Naito1996} are very useful after some generalizations we present here. We do not focus on some sort of estimations of the Diophantine dimension (expect simple theorem \ref{KnaitoOnefreqEstTheorem}) or another recurrent dimension, assuming some approximating conditions of exponents, but we follow a more important problem: \textit{how the recurrent properties of an almost periodic solution, provided by an almost periodic perturbation, depend on the recurrent properties of this perturbation}. In particular, one can look for a link between the Diophantine dimension of a solution and the same characteristic of a perturbation term. Such an approach is similar to methods of dimension theory. Different branches of the modern dimension theory in dynamical systems are outlined in the books \cite{Boichenko2005,Pesin2008,Robinson2010, LeoKuzReit2017,Robinson2001}.

The above problem is not new (see \cite{Kloeden2010} and links therein). Many methods to prove the existence of almost periodic solutions in addition to the existence often lead to a modules containment for a solution and a given almost periodic perturbation term\footnote{The module of an almost periodic function is the least additive subgroup of reals containing the Fourier exponents.}. It means that (in some sense) the solution is no more complicated than the perturbation term. The same can be said about the solution complexity with respect to the perturbation term if some relations between their Diophantine dimensions are proved (as in corollary \ref{coroll:KNAITO}). Moreover, to show such relations it is usually necessary to prove the containment of the sets of almost periods (theorem \ref{KNaitoTheorem} and corollary \ref{coroll:KNAITO}), which is equivalent to the module containment (theorem 4.5 in \cite{Fink2006}). Thus, relations between Diophantine dimensions of two almost periodic functions may sometimes be considered as an effective version of their modules containment.

This paper is organized as follows. At first (section 1) we give basic notions in the theory of uniformly almost periodic functions (\cite{Levitan1953,LevitanZhikov1982,Pankov2012}), topological and fractal dimension (\cite{LeoKuzReit2017,Boichenko2005,Robinson2010}), continued fractions (\cite{Khinchin1997,Schmidt1980,Katok2003}). In section 2 we describe the Liouville phenomenon for almost periodic functions and study some basic properties of the Diophantine dimension. In section 3 we estimate the Diophantine dimension for certain quasiperiodic functions (theorems \ref{HullDimensionsTh} and \ref{KnaitoOnefreqEstTheorem}). In section 4 we reformulate the result of K. Naito (theorem \ref{KNaitoTheorem} and corollary \ref{coroll:KNAITO}) for evolution equations with a strongly monotone operator as a method to estimate the Diophantine dimension of almost periodic trajectories. Section 5 is devoted to some short remarks, including the discussion of a link between the presented approach and the so called effective versions of the Kronecker theorem.
\section{Preliminaries}

\subsection{Almost periodic functions}
Let $E$ be a Banach space\footnote{To simplify statements and proofs in which the Fourier series is used we always consider Banach spaces over $\mathbb{C}$. The real case is treated similarly.}. A continuous function $u \colon \mathbb{R} \to E$ is called \textit{uniformly $E$-almost periodic} (for the sake of brevity, $E$-almost periodic or, simply, almost periodic) if for every $\varepsilon>0$ there exists $L(\varepsilon)>0$ such that for all $a \in \mathbb{R}$ there exists $\tau \in [a,a+L(\varepsilon)]$ satisfying the inequality
\begin{equation}
\label{almostperiod}
\sup_{t \in \mathbb{R}}|u(t+\tau) - u(t)|_{E} \leq \varepsilon.
\end{equation}
Here $\tau$ is called an $\varepsilon$-\textit{almost period}, and the number $L(\varepsilon)$ is the \textit{inclusion length} for $\varepsilon$-\textit{almost periods}. Denote by $l_{u}(\varepsilon)$ the minimal inclusion length for $\varepsilon$-almost periods of $u$.

For every $E$-almost periodic function $u$ there is a formal \textit{Fourier series}
\begin{equation}
\label{FourierSeres}
u(t) \sim \sum_{k=1}^{\infty}U_{k} e^{i\lambda_{k} t}
\end{equation}
with $\lambda_{k} \in \mathbb{R}$ and $U_{k} \in E$. We denote the set of all Fourier exponents $\{\lambda_{k}\}$ of $u$ by $\Lambda(u)$.

\begin{theorem}[Approximation theorem]
	\label{th: Approximation-th}
	Every uniformly almost periodic function (\ref{FourierSeres}) is the uniform (on $\mathbb{R}$) limit of a sequence of trigonometric polynomials
	\begin{equation}
	P_{\varepsilon}(t) = \sum\limits_{k=1}^{n_{\varepsilon}}B_{k}^{\varepsilon} e^{i \lambda_k t}.
	\end{equation}
\end{theorem}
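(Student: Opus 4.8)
The plan is to realize the approximating polynomials as Bochner--Fej\'er means, which is the classical constructive route and keeps everything inside the sup-norm. First I would establish the existence of the \emph{mean value}
\[
M(f) = \lim_{T\to\infty}\frac{1}{2T}\int_{-T}^{T} f(t)\,dt
\]
for every $E$-almost periodic $f$, together with the uniform version in which $[-T,T]$ is replaced by an arbitrary $[a,a+2T]$. The point is that almost periodicity forces averages over long intervals to be nearly independent of their position: given $\varepsilon$, the inclusion length $L(\varepsilon)$ controls the discrepancy between two such averages, and a Cauchy argument then yields convergence. Applying this to $t\mapsto u(t)e^{-i\lambda t}$ (again almost periodic) defines the Fourier coefficients $U_\lambda = M(u(\cdot)e^{-i\lambda(\cdot)})$, and a Bessel-type inequality shows that $U_\lambda\neq 0$ for at most countably many $\lambda$, which are precisely the exponents $\Lambda(u)$ appearing in (\ref{FourierSeres}).

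Next I would fix a finite set of relevant exponents $\lambda_1,\dots,\lambda_n$ and choose a \emph{base} $\beta_1,\dots,\beta_m$, i.e.\ reals such that every $\lambda_k$ is an integer combination of the $\beta_j$; this is possible because the subgroup of $(\mathbb{R},+)$ generated by $\lambda_1,\dots,\lambda_n$ is finitely generated and torsion-free, hence free abelian. With this base I build an $m$-dimensional Fej\'er kernel as a product $K_N(\xi_1,\dots,\xi_m)=\prod_{j=1}^{m}\Phi_{N}(\xi_j)$ of one-dimensional Fej\'er kernels $\Phi_N$, and define the associated Bochner--Fej\'er polynomial
\[
\sigma_N(t)=\sum_{k} d_k^{(N)}\,U_{\lambda_k}\,e^{i\lambda_k t},
\]
where the convergence factors $d_k^{(N)}$ are the corresponding Fourier coefficients of $K_N$. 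By construction each $\sigma_N$ is a genuine trigonometric polynomial, $0\le d_k^{(N)}\le 1$, and $d_k^{(N)}\to 1$ as $N\to\infty$.

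The substance of the argument --- and the step I expect to be the main obstacle --- is the uniform convergence $\sup_{t}|u(t)-\sigma_N(t)|_E\to 0$. The idea is to rewrite $\sigma_N(t)$ as a mean-value ``convolution'' of $u$ against the nonnegative kernel $K_N$, so that $u(t)-\sigma_N(t)$ becomes an average of the differences $u(t)-u(t+\tau)$ weighted by $K_N$. Since $K_N$ is a nonnegative approximate identity of total mass one that concentrates near the origin, and since an almost periodic $u$ is uniformly continuous, this average is uniformly small for large $N$. Making this rigorous over $\mathbb{R}$, rather than over a torus, is the delicate part: one must control the mass of $K_N$ away from the diagonal using the mean value in place of an honest integral, and this is precisely where the choice of base and the product structure of the kernel do the real work.

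A cleaner but less elementary alternative would be to transport the problem to the Bohr compactification $b\mathbb{R}$, on which $u$ extends to a continuous function and the trigonometric polynomials form a conjugation-closed, point-separating subalgebra containing the constants; the Stone--Weierstrass theorem then gives density in $C(b\mathbb{R})$ in one stroke, and restriction back to $\mathbb{R}$ finishes the proof. I would present the Bochner--Fej\'er version as the main line, since it is self-contained and produces the approximants explicitly, and mention the Stone--Weierstrass route only as a remark.
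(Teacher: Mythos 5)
The paper states this approximation theorem without proof, as classical background (it is Bohr's theorem, for which the author simply cites the books of Levitan, Levitan--Zhikov and Pankov), so there is no internal argument to compare yours against; your proposal has to stand on its own. Its scaffolding is the standard Bochner--Fej\'er route and is essentially right: the mean value, the Fourier coefficients $U_\lambda = M\bigl(u(\cdot)e^{-i\lambda \cdot}\bigr)$, Bessel's inequality and the countability of $\Lambda(u)$, a base for the exponents, and the product Fej\'er kernel with convergence factors $d_k^{(N)}\to 1$. Two bookkeeping points: to approximate $u$ itself rather than a finite section of its series you must let the base grow along the sequence (a rational base of the whole module of exponents and an exhaustion), not fix a single finite set of exponents; and the Stone--Weierstrass alternative presupposes that $u$ extends continuously to $b\mathbb{R}$, which is essentially Bochner's compactness criterion, and it does not by itself produce approximants whose exponents lie in $\Lambda(u)$, as the statement requires.

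The genuine gap sits exactly at the step you yourself call the substance of the argument. Over $\mathbb{R}$, with the mean value in place of integration, the Bochner--Fej\'er kernel $K_N$ is \emph{not} an approximate identity concentrating near $s=0$: already for a single base frequency $\beta$, the Fej\'er factor $\Phi_N(\beta s)$ is of size $N$ on the whole relatively dense, unbounded set of $s$ with $\beta s$ close to $0 \pmod{2\pi}$, and the mean value weights all of these returns equally. Hence the estimate $\sup_t|u(t)-\sigma_N(t)|_E \le M_s\bigl\{\sup_t|u(t)-u(t+s)|_E\,K_N(s)\bigr\}$ is only useful if the near-return times of the Kronecker flow generated by the base are genuine $\varepsilon$-almost periods of $u$ --- which is not a consequence of uniform continuity and is close in content to the theorem being proved. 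The classical repair bypasses this: positivity and unit mean of $K_N$ give $\sup|\sigma_N|\le\sup|u|$, show that every $\varepsilon$-almost period of $u$ is an $\varepsilon$-almost period of each $\sigma_N$, and give equi-uniform continuity of the family $\{\sigma_N\}$; Parseval's equation gives the mean-square convergence $M\bigl(|u-\sigma_N|^2\bigr)\to 0$; and one then upgrades to uniform convergence via the uniqueness-type lemma that an almost periodic function with inclusion length at most $L$ and a fixed modulus of continuity, whose modulus exceeds $a$ somewhere, satisfies $M(|f|)\ge c(a,L)>0$ (large values recur with positive lower density). Without this step --- or an equivalent device such as Bochner's normal-families argument combined with the uniqueness theorem --- the proposal does not close.
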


The \textit{hull} of $u$, $\mathcal{H}(u)$, is defined by the set
\begin{equation}
\mathcal{H}(u) := Cl\{u_{\tau} (\cdot) := u (\cdot + \tau)\ | \ \tau \in \mathbb{R} \},
\end{equation}
where the closure is taken in the topology of uniform convergence in the space $C_{b}(\mathbb{R}; E)$. The hull of an almost periodic function is a compact minimal set, i.e. for every $v \in \mathcal{H}(u)$ we have that $v$ is almost periodic and $\mathcal{H}(v)=\mathcal{H}(u)$.

\subsection{Topological and fractal dimension}

For a given metric space $(X,\rho)$ we denote its Lebesgue covering dimension by $\dim_{T}X$. In the further we will deal with compact (and, consequently, separable) metric spaces, therefore $\dim_{T}X$ we call simply a topological dimension of $X$ and this will not cause misunderstandings.

Now let $(X,\rho)$ be a compact metric space and $N_{\varepsilon}(X)$ is the smallest number of open balls with radius $\varepsilon$ required to cover $X$. The limit
\begin{equation}
\ldim(X) := \liminf\limits_{\varepsilon \to 0+}\frac{\ln N_{\varepsilon}(X)}{\ln(1/\varepsilon)}
\end{equation}
is called the \textit{lower box dimension} of $X$ and the limit
\begin{equation}
\udim(X) := \limsup\limits_{\varepsilon \to 0+}\frac{\ln N_{\varepsilon}(X)}{\ln(1/\varepsilon)}
\end{equation}
is called the \textit{fractal} or \textit{upper box dimension} of $X$.
The following inequality holds
$$\dim_{T}(X) \leq \ldim(X) \leq \udim(X).$$
If $\ldim(X)$ and $\udim(X)$ coincide, we write $\dim_{F}(X)$ for this common value, which we call fractal dimension of $X$.

In a contrast to the topological dimension, the fractal dimension is not a topological invariant, i.e. its value can change if we replace the given metric by a topologically equivalent one (=generating the same topology). If we want to emphasize the choice of the metric we write $\dim_{F}(X,\rho)$.

\begin{example}
	\label{Example1}
	Let $(X,\rho)$ be a compact metric space. For an arbitrary $\alpha \in (0,1]$ we put a new metric on $X$ (which is topologically equivalent to $\rho$) by
	\footnote{Triangle inequality follows from the inequality $(x+y)^{\alpha} \leq x^{\alpha} + y^{\alpha}$ for $x,y\geq 0$ and $\alpha \in (0,1]$.}
	 $$\rho_{\alpha}(x,y) := \rho^{\alpha}(x,y), \ x,y \in X.$$
	 One can show that $\ldim(X,\rho_{\alpha})=\frac{\ldim(X,\rho)}{\alpha}$ and $\udim(X,\rho_{\alpha})=\frac{\udim(X,\rho)}{\alpha}$.
\end{example}

A metric $\rho$ is \textit{stronger} than another metric $\rho'$ if there exists a constant $C>0$ such that for all $x,y \in X$ the inequality
\begin{equation}
\label{UniformEqMetrics}
\rho'(x,y) \leq C\rho(x,y).
\end{equation}
is satisfied. The following lemma is easy to check. 

\begin{lemma}
	\label{FractalDimEstimateLemma}
	Let $(X,\rho)$ be a compact metric space and $\rho'$ is another metric on $X$ such that $\rho$ is stronger that $\rho'$; then $\udim(X,\rho') \leq \udim(X,\rho)$ and $\ldim(X,\rho') \leq \ldim(X,\rho)$.
\end{lemma}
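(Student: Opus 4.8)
The plan is to translate the metric comparison into a comparison of covering numbers and then feed it into the definitions of the box dimensions. First I would unwind the hypothesis: since $\rho$ is stronger than $\rho'$, by (\ref{UniformEqMetrics}) there is a constant $C > 0$ with $\rho'(x,y) \le C\rho(x,y)$ for all $x,y \in X$. The geometric content of this is a ball inclusion. If $y$ lies in the open $\rho$-ball of radius $\varepsilon$ about $x$, i.e. $\rho(x,y) < \varepsilon$, then $\rho'(x,y) \le C\rho(x,y) < C\varepsilon$, so $y$ lies in the open $\rho'$-ball of radius $C\varepsilon$ about $x$. Writing $N_\varepsilon(X,\rho)$ and $N_\varepsilon(X,\rho')$ for the covering numbers in the two metrics, it follows that any cover of $X$ by $N_\varepsilon(X,\rho)$ open $\rho$-balls of radius $\varepsilon$ becomes, upon keeping the same centers and enlarging each radius to $C\varepsilon$, a cover of $X$ by $\rho'$-balls of radius $C\varepsilon$. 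Hence
\begin{equation*}
N_{C\varepsilon}(X,\rho') \le N_\varepsilon(X,\rho) \qquad \text{for all } \varepsilon > 0.
\end{equation*}

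Next I would substitute this bound into the definition of the upper box dimension. Reparametrizing the limit superior by $\delta = C\varepsilon$ (which is harmless since $\delta \to 0+$ iff $\varepsilon \to 0+$) and applying the covering inequality gives
\begin{equation*}
\udim(X,\rho') = \limsup_{\varepsilon \to 0+} \frac{\ln N_{C\varepsilon}(X,\rho')}{\ln(1/(C\varepsilon))} \le \limsup_{\varepsilon \to 0+} \frac{\ln N_\varepsilon(X,\rho)}{\ln(1/\varepsilon) - \ln C},
\end{equation*}
where I used $\ln(1/(C\varepsilon)) = \ln(1/\varepsilon) - \ln C$, positive for small $\varepsilon$. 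The identical manipulation with $\liminf$ in place of $\limsup$ yields the corresponding bound for $\ldim(X,\rho')$.

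It remains to absorb the additive shift $\ln C$ in the denominator, and this is the only genuinely analytic point of the argument. Writing the right-hand side as $\frac{\ln N_\varepsilon(X,\rho)}{\ln(1/\varepsilon)} \cdot \frac{\ln(1/\varepsilon)}{\ln(1/\varepsilon) - \ln C}$, the second factor tends to $1$ as $\varepsilon \to 0+$, while the first factor is nonnegative (since $N_\varepsilon \ge 1$) and, for a compact $X$, bounded. Since multiplying a nonnegative sequence by a factor converging to a positive limit leaves both its $\limsup$ and its $\liminf$ scaled by that limit, the $\limsup$ (resp. $\liminf$) of the product equals $\udim(X,\rho)$ (resp. $\ldim(X,\rho)$). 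Combining with the displays above gives $\udim(X,\rho') \le \udim(X,\rho)$ and $\ldim(X,\rho') \le \ldim(X,\rho)$, as claimed. I expect this last limit computation, rather than the covering-number estimate, to be the step that needs the most care, though it is entirely routine.
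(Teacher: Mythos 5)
Your proof is correct and is precisely the standard argument the paper has in mind --- the paper itself gives no proof of this lemma, remarking only that it ``is easy to check,'' and your route (ball inclusion $\Rightarrow$ covering-number inequality $N_{C\varepsilon}(X,\rho')\leq N_{\varepsilon}(X,\rho)$ $\Rightarrow$ absorb the additive $\ln C$ in the denominator) is the natural one. The only cosmetic slip is the claim that $\frac{\ln N_{\varepsilon}(X,\rho)}{\ln(1/\varepsilon)}$ is bounded for compact $X$ (it need not be, since a compact metric space can have infinite upper box dimension), but this is harmless: if that ratio has infinite $\limsup$ the asserted inequality is trivial, and the fact that multiplying a nonnegative quantity by a factor tending to $1$ preserves $\limsup$ and $\liminf$ holds in the extended-real sense anyway.
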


Let $X$ be a topological space with the topology generated by any of two metrics $\rho$ and $\rho'$. Choose an arbitrary open cover $\mathcal{U}$ of $X$. We say, that $\rho$ is \textit{locally stronger} than $\rho'$ w.r.t. $\mathcal{U}$ if the inequality (\ref{UniformEqMetrics}) holds with a constant $C=C(U)$ for all $x,y \in U$, where $U \in \mathcal{U}$ is arbitrary. Since we do not know how well the following lemma is known, we give a proof of it.

\begin{lemma}
	\label{LocalEstimatesLemma}
	Let $X$ be a compact topological space with the topology generated by any of metrics $\rho$ and $\rho'$. Let $\mathcal{U}$ be an open cover of $X$; then the following statements are equivalent
	\begin{enumerate}
		\item $\rho$ is stronger than $\rho'$;
		\item $\rho$ is locally stronger than $\rho'$ w.r.t. $\mathcal{U}$.
	\end{enumerate}
\end{lemma}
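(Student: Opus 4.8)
The implication $(1)\Rightarrow(2)$ is immediate: if $\rho$ is stronger than $\rho'$ with a global constant $C$, then that same $C$ serves as $C(U)$ for every $U\in\mathcal{U}$. So the plan is to concentrate on the converse $(2)\Rightarrow(1)$, which I would handle as a compactness argument.

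First I would use compactness of $X$ to pass from $\mathcal{U}$ to a finite subcover $U_{1},\dots,U_{n}$, with associated local constants $C_{1}:=C(U_{1}),\dots,C_{n}:=C(U_{n})$, and set $C_{0}:=\max_{i}C_{i}$. The key tool is the Lebesgue number lemma applied to the compact metric space $(X,\rho)$ together with this finite open cover; here I would first note that, since both metrics generate the same topology, $\mathcal{U}$ really is an open cover in the $\rho$-topology, so the lemma applies. It produces a $\delta>0$ such that every pair $x,y$ with $\rho(x,y)<\delta$ lies in a common $U_{i}$. For such a pair the local hypothesis yields $\rho'(x,y)\le C_{i}\rho(x,y)\le C_{0}\rho(x,y)$.

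It then remains to treat pairs with $\rho(x,y)\ge\delta$, and for these I would exploit that $(X,\rho')$ is compact as well, hence of finite diameter $D:=\sup_{x,y\in X}\rho'(x,y)<\infty$. For such pairs one gets $\rho'(x,y)\le D\le (D/\delta)\rho(x,y)$. Combining the two regimes, the single constant $C:=\max\{C_{0},D/\delta\}$ witnesses that $\rho$ is stronger than $\rho'$ on all of $X$, which is statement $(1)$.

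The only genuine obstacle is the case where $x$ and $y$ lie in no single cover element, so that the local estimate cannot be applied directly; this is exactly the difficulty the Lebesgue number removes for nearby points, while the global boundedness of $\rho'$ disposes of distant points for free. Notably, no control over how the constants $C(U)$ vary across $\mathcal{U}$ is required, precisely because compactness collapses the cover to finitely many pieces and reduces everything to a maximum of finitely many numbers.
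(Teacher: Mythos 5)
Your proof is correct, and it is organized differently from the paper's. The paper argues by contradiction: assuming no global constant exists, it extracts sequences $x_n,y_n$ with $\rho'(x_n,y_n)/\rho(x_n,y_n)\geq n$, passes to convergent subsequences, and uses the Lebesgue number of $\mathcal{U}$ to force the limit points to stay a definite $\rho$-distance apart, whence $\rho'(x_n,y_n)\to\infty$ contradicts the boundedness of the compact space $(X,\rho')$. You instead give a direct two-regime argument: a finite subcover lets you take the maximum $C_0$ of finitely many local constants, the Lebesgue number handles all pairs with $\rho(x,y)<\delta$, and the finite $\rho'$-diameter handles the rest, yielding the explicit constant $C=\max\{C_0,D/\delta\}$. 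The two proofs use the same two ingredients (Lebesgue number and boundedness of $\rho'$), but your version is constructive and produces a quantitative constant, whereas the paper's contradiction argument never needs to reduce to a finite subcover or to bound the constants $C(U)$ uniformly; indeed, your observation that compactness collapses the cover to finitely many pieces is precisely the step that makes the direct route work and that the paper sidesteps. Your proof is, if anything, the cleaner of the two.
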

\begin{proof}
	Let us show $2 \Rightarrow 1$. Assuming the opposite we find two sequences $x_n,y_n \in X, n=1,2,\ldots$ such that
	\begin{equation}
	\frac{\rho'(x_n,y_n)}{\rho(x_n,y_n)} \geq n.
	\end{equation}
	By compactness, there exist convergent subsequences $\{x'_n\} \subset \{x_n\}$ and $\{y'_n\} \subset \{y_n\}$ such that $x'_n \to x_0$ and $y'_n \to y_0$. Let $2\delta>0$ be the Lebesgue number of the cover $\mathcal{U}$ w.r.t. $\rho$. It is clear that $\rho(x_0,y_0)>\delta$. Thus,
	\begin{equation}
	\frac{\rho'(x_0,y_0)}{\delta} \geq n, \ n=1,2,\ldots,
	\end{equation}
	and this is a contradiction.
\end{proof}

\subsection{Continued fractions}

\label{subsec: continuedfractions}

Let $G \colon (0,1) \to (0,1)$ be the \textit{Gauss map}, defined by the equality
\begin{equation}
G(x):=\frac{1}{x} - \left\lfloor\frac{1}{x}\right\rfloor.
\end{equation}
For every number $\omega \in \mathbb{R} \setminus \mathbb{Q}$ we consider its fraction expansion $\{ a_{k} \}_{k \geq 0}$ defined as follows. Firstly, we define $\{ \omega_{k} \}_{k \geq 0}$. Let $\omega_0 = \omega - \left\lfloor\omega\right\rfloor$ and $\omega_{k}=G^{k}(\omega_0)$ for $k \geq 1$. Then the terms of the continued fraction of $\omega$ is defined by $a_{0} = \left\lfloor\omega\right\rfloor$ and $a_{k} =\left\lfloor\frac{1}{\omega_{k-1}}\right\rfloor$ for $k\geq 1$. So, we write (formally)
\begin{equation}
\omega = [a_0; a_1, a_2, a_3,\cdots] = a_0+\cfrac{1}{a_1+\cfrac{1}{a_2+\cfrac{1}{a_3+\ldots}}}\;
\end{equation}
For $k\geq 0$ the fraction
\begin{equation}
\frac{p_k}{q_k} = a_0 + \cfrac{1}{a_1 + \cfrac{1}{a_2 + \cfrac{1}{ \ddots + \cfrac{1}{a_k} }}}
\end{equation}
is called the \textit{$k$-th convergent} of $\omega$. The following estimate holds
\begin{equation}
\label{ConFracIneq}
\frac{1}{q_{k}(q_{k+1}+q_{k})}<\left|\omega - \frac{p_{k}}{q_k} \right| < \frac{1}{q_{k+1}q_k} < \frac{1}{a_{k+1} q^2_k}.
\end{equation}
Let $p_{-2}=0$,$p_{-1}=1$,$q_{-2}=1$,$q_{-1}=0$. Then the convergents satisfy the following recurrence relation 
\begin{equation}
\label{ConvEq1}
p_{k}=a_{k}p_{k-1}	+ p_{k-2},
\end{equation}
\begin{equation}
\label{ConvEq2}
q_{k}=a_{k}q_{k-1}	+ q_{k-2}.
\end{equation}

The convergents (for $k\geq 1$) give the best approximations of an irrational number, i.e. the difference $|q_k\omega-p_k|$  is the minimal among all the differences like $|q\omega-p|$, where $q$ is lesser than $q_{k+1}$. From (\ref{ConFracIneq}) it is clear that the term $a_{k+1}$ determines the quality of approximation by the $k$-th convergent $\frac{p_k}{q_k}$. Thus, the growth rate of the sequence $a_{k}$ determines the quality of approximation of an irrational number by rationals. There are many phenomenons in the theory of dynamical systems related to such approximation properties (see \cite{Katok1997} or \cite{Katok2003}).
\section{Diophantine dimension}

\subsection{Liouville phenomenon for almost periodic functions}
\label{subs:Liouvillephenomenon}
Consider an almost periodic trajectory given by $u(t) = e^{i2\pi t} + e^{i 2\pi \omega t}$, where $\omega$ is an irrational number. From the Kronecker theorem (theorem \ref{th: Kronecker-Th}) it follows that the trajectory is dense in the circle of radius 2 centred at the origin. Let $D=Cl(u(\mathbb{R}))$. The following proposition is easy to prove.
\begin{proposition}
	\label{ApDistribStat}
	There is a Borel measure $\mu$ supported on $D$ such that $\mu$ is independent of the irrational $\omega$ and $u$ is uniformly distributed with respect to $\mu$. In other words, for any Borel subset $C \subset D$ we have
	\begin{equation}
	\label{ApDistrib}
	\lim\limits_{T \to +\infty} \frac{1}{T}\int\limits_{0}^{T}\chi_{C}(u(t))dt=\mu(C).
	\end{equation}
\end{proposition}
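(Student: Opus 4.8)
The plan is to realize $u$ as the projection of a linear flow on the two-dimensional torus and to transport the classical equidistribution theorem through this projection. Write $\mathbb{T}^2 = \{(z_1,z_2) \in \mathbb{C}^2 : |z_1| = |z_2| = 1\}$, let $m$ be the normalized Haar measure on $\mathbb{T}^2$, and consider the Kronecker flow $\phi_t(z_1,z_2) := (e^{i2\pi t}z_1, e^{i2\pi\omega t}z_2)$. Introducing the summation map $S \colon \mathbb{T}^2 \to \mathbb{C}$, $S(z_1,z_2) := z_1 + z_2$, we have the key identity $u(t) = S(\phi_t(1,1))$, and the image $S(\mathbb{T}^2)$ is exactly $D = Cl(u(\mathbb{R}))$ (the closed disk of radius $2$, since every point of modulus $\leq 2$ is a sum of two unit vectors). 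I would then define $\mu := S_{*}m$ as the pushforward of Haar measure, i.e. $\mu(C) := m(S^{-1}(C))$ for Borel $C \subset D$. By construction $\mu$ is a Borel probability measure supported on $S(\mathbb{T}^2) = D$, and since $m$ does not involve $\omega$ in any way, neither does $\mu$ --- this already delivers the independence claim.

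The heart of the argument is that $\phi_t$ is uniquely ergodic with unique invariant measure $m$. First I would verify equidistribution on the characters $\chi_{n_1,n_2}(z_1,z_2) = z_1^{n_1} z_2^{n_2}$, $n_1,n_2 \in \mathbb{Z}$: for $(n_1,n_2) \neq (0,0)$ one computes
\[
\frac{1}{T}\int_0^T \chi_{n_1,n_2}(\phi_t(1,1))\,dt = \frac{1}{T}\int_0^T e^{i2\pi(n_1 + n_2\omega)t}\,dt \xrightarrow[T\to\infty]{} 0,
\]
because $n_1 + n_2\omega \neq 0$ (here the irrationality of $\omega$, equivalently the $\mathbb{Q}$-linear independence of $1$ and $\omega$, is used), while for $(n_1,n_2) = (0,0)$ the average equals $1 = \int \chi_{0,0}\,dm$. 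Since the characters span a dense subalgebra of $C(\mathbb{T}^2)$ by Stone--Weierstrass, linearity and uniform approximation give $\frac{1}{T}\int_0^T g(\phi_t(1,1))\,dt \to \int_{\mathbb{T}^2} g\,dm$ for every $g \in C(\mathbb{T}^2)$. Applying this to $g = f \circ S$ with arbitrary $f \in C(\mathbb{C})$, and using $u(t) = S(\phi_t(1,1))$ together with the change-of-variables identity $\int_{\mathbb{T}^2} f\circ S\,dm = \int_D f\,d\mu$, I obtain
\[
\lim_{T\to\infty} \frac{1}{T}\int_0^T f(u(t))\,dt = \int_D f\,d\mu .
\]
In other words, the occupation measures $\mu_T := \frac{1}{T}\int_0^T \delta_{u(t)}\,dt$ converge weakly-$*$ to $\mu$.

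It remains to pass from continuous test functions to the indicator $\chi_C$, which is the delicate point. By the portmanteau theorem, the weak-$*$ convergence $\mu_T \to \mu$ upgrades to $\mu_T(C) \to \mu(C)$ precisely for $\mu$-continuity sets, i.e. Borel $C$ with $\mu(\partial C) = 0$; the formula \eqref{ApDistrib} then holds for every such $C$, and I would record that this is the correct reading of the statement, since for a set whose boundary carries positive $\mu$-mass the limit can genuinely fail. The approximation itself is routine: given $\mu(\partial C) = 0$, one sandwiches $\chi_C$ between continuous functions that agree with it off an arbitrarily thin neighbourhood of $\partial C$ and lets the neighbourhood shrink, invoking the convergence already established for continuous integrands. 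Thus the main obstacle is not the ergodic-theoretic core --- a one-line computation on characters --- but this final measure-theoretic refinement; the restriction to continuity sets is harmless for the intended application, where $\mu$ is non-atomic (every fiber $S^{-1}(w)$ is a $m$-null subset of $\mathbb{T}^2$) and $C$ is typically open or has smooth boundary.
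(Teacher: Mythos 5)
The paper offers no proof of this proposition (it is dismissed as ``easy to prove''), so there is nothing of the author's to compare against; your argument is the natural one and it is correct. Realizing $u(t)=S(\phi_t(1,1))$ with $S(z_1,z_2)=z_1+z_2$, taking $\mu=S_{*}m$, and establishing equidistribution of the Kronecker flow via Weyl's criterion on characters is exactly the standard route, and every step is sound: the identification $D=S(\mathbb{T}^2)$ (the closed disk of radius $2$, which is what the paper's ``circle of radius 2'' must mean), the independence of $\mu$ from $\omega$, the computation using $n_1+n_2\omega\neq 0$, Stone--Weierstrass, and the change of variables for the pushforward. Your closing caveat is not pedantry but a genuine correction to the statement: as written, with ``any Borel subset $C\subset D$'', \eqref{ApDistrib} is false. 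For example $C=u(\mathbb{R})$ is Borel (an $F_\sigma$ set) and satisfies $\frac{1}{T}\int_0^T\chi_C(u(t))\,dt=1$ for every $T>0$, yet $\mu(C)=m(S^{-1}(u(\mathbb{R})))=0$, since the preimage is a countable union of images of one-parameter families of (generically two-point) fibers, plus at most countably many circles over the zeros of $u$, and is therefore $m$-null in the two-dimensional torus. The restriction to $\mu$-continuity sets ($\mu(\partial C)=0$) obtained through the portmanteau theorem is the correct form of the proposition, and your proof of that form is complete; since the author only uses the proposition qualitatively (both trajectories share the same limiting distribution), this correction does not affect anything downstream.
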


Now we will choose distinct $\omega$'s, which differs by the approximation (by rationals) properties and see what happens. Let, for example, $\omega=[0;5,10^{9},\ldots]$. Then from (\ref{ConFracIneq}) it follows that $|\omega-\frac{1}{5}| < 10^{-9}$. Thus, $u(t)$ is close to $g(t) = e^{i2\pi t} + e^{i \frac{2\pi}{5} t}$ for a long time, namely,  $|f(t)-g(t)| < 10^{-3}$ for $t \in [0,10^6]$. Now we put $\omega=\sqrt{2}=[1;2,2,2,\ldots]$, which is a badly approximable number. To get a good rational approximation to $\sqrt{2}$ we need a fraction with large enough denominator. Thus, a trajectory should be more complicated, than in the previous case. This phenomenon can be observed by Figure \ref{ris:image1}.
\begin{figure}[h]
	\begin{minipage}[h]{0.49\linewidth}
		\center{\includegraphics[width=1\linewidth]{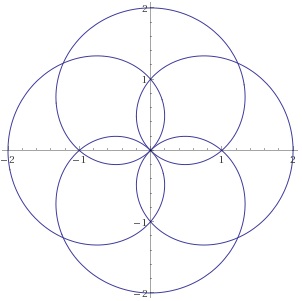} \\ a) $\omega=\frac{1}{5}, t=0..5$}
	\end{minipage}
	\hfill
	\begin{minipage}[h]{0.49\linewidth}
		\center{\includegraphics[width=1\linewidth]{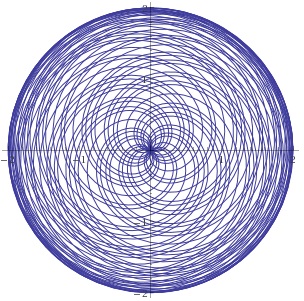} \\ b) $\omega=\sqrt{2}, t=0..50$}
	\end{minipage}
	\caption{A part of the trajectory $u(t)$ for certain $\omega$.}
	\label{ris:image1}
\end{figure}
It should be noted that such effects may appear at any level of approximation. But, by the proposition \ref{ApDistribStat}, both trajectories have the same distribution. Thus, a good approximation of $\omega$ means a slow convergence of the limit in (\ref{ApDistrib}).

By the theorem \ref{th: Approximation-th} a similar property can be shown in the case of general almost periodic function. But now there are two factors. The first is the quality of simultaneous approximation of the exponents of the polynomial $P_{\varepsilon}$ (see theorem \ref{th: Approximation-th}), and the second is the rate of convergence of the sequence $P_{\varepsilon}$ to $u$, which determined, roughly speaking, by the decay rate of the Fourier coefficients. As it can be seen that the same factors affects the distribution of the almost periods. If the exponents of an almost periodic trajectory can be approximated extremely fast then there occurs the, so called, \textit{Liouville phenomenon}. A simple version of it appears in irrational rotations of a circle (see \cite{Katok1997,Katok2003}).

\subsection{Definition and basic properties}
Let $E$ be a Banach space and $u$ is an $E$-almost periodic function (non-zero). The limit\footnote{As we noted earlier, this definition is contained in \cite{Naito2001}. We use another name and symbol to emphasize the nature and importance of the introduced object in the theory of almost periodic functions.}
\begin{equation}
\label{diophantinedimension}
\mathfrak{Di}(u):=\limsup_{\varepsilon \to 0+}\frac{\ln l_{u}(\varepsilon)}{\ln 1 / \varepsilon}
\end{equation}
is called \textit{the Diophantine dimension} of $u$.

Let $F$ be another Banach space. We say that a map $\chi \colon E \to F$ satisfies \textit{the H\"{o}lder condition}, if there are constants $\alpha \in (0,1]$ and $C>0$ such that the inequality
\begin{equation}
\label{GelderProp}
|\chi(x)-\chi(y)|_{F} \leq C |x-y|^{\alpha}_{E}
\end{equation}
holds for all $x,y \in E$. Here $\alpha$ is called \textit{the H\"{o}lder exponent}.

It is clear that $\chi \circ u$ is $F$-almost periodic.
\begin{proposition}
	\label{GelderTransLemma}
	Let $u$ be an $E$-almost periodic function and $\chi \colon E \to F$ satisfies the H\"{o}lder condition with an exponent $\alpha \in (0,1]$; then
	\begin{equation}
	\mathfrak{Di}(\chi \circ u) \leq \frac{\mathfrak{Di}(u)}{\alpha}.
	\end{equation}
\end{proposition}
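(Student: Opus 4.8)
The plan is to exploit the H\"{o}lder condition to convert an $\varepsilon$-almost period of $u$ into an almost period of $\chi\circ u$ at a suitably rescaled accuracy, and then to push this inequality through the definition of the Diophantine dimension. The key elementary observation is that if $\tau$ is an $\varepsilon$-almost period of $u$, i.e.\ $\sup_{t}|u(t+\tau)-u(t)|_{E}\leq\varepsilon$, then by (\ref{GelderProp}) one has
$$\sup_{t}|\chi(u(t+\tau))-\chi(u(t))|_{F}\leq C\sup_{t}|u(t+\tau)-u(t)|_{E}^{\alpha}\leq C\varepsilon^{\alpha}.$$
Hence every $\varepsilon$-almost period of $u$ is simultaneously a $C\varepsilon^{\alpha}$-almost period of $\chi\circ u$. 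Consequently any inclusion length for the $\varepsilon$-almost periods of $u$ serves as an inclusion length for the $C\varepsilon^{\alpha}$-almost periods of $\chi\circ u$, so by minimality of the quantity $l$ this gives
$$l_{\chi\circ u}(C\varepsilon^{\alpha})\leq l_{u}(\varepsilon).$$

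Next I would substitute $\delta=C\varepsilon^{\alpha}$ (equivalently $\varepsilon=(\delta/C)^{1/\alpha}$), noting that $\varepsilon\mapsto C\varepsilon^{\alpha}$ is an increasing bijection of a neighbourhood of $0+$ onto a neighbourhood of $0+$, so taking $\limsup$ over $\delta\to 0+$ is the same as taking it along the corresponding values $\varepsilon\to 0+$. Using $\ln(1/\delta)=\alpha\ln(1/\varepsilon)-\ln C$, the defining quotient for $\chi\circ u$ is bounded by
$$\frac{\ln l_{\chi\circ u}(\delta)}{\ln(1/\delta)}\leq\frac{\ln l_{u}(\varepsilon)}{\alpha\ln(1/\varepsilon)-\ln C}=\frac{1}{\alpha}\cdot\frac{\ln l_{u}(\varepsilon)}{\ln(1/\varepsilon)}\cdot\frac{\ln(1/\varepsilon)}{\ln(1/\varepsilon)-\alpha^{-1}\ln C}.$$

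Finally I would take $\limsup$ as $\varepsilon\to 0+$. Since $\ln(1/\varepsilon)\to+\infty$, the last fraction tends to $1$, so the additive term $\ln C$ produced by the H\"{o}lder constant is absorbed and does not affect the limit. This yields
$$\mathfrak{Di}(\chi\circ u)\leq\frac{1}{\alpha}\limsup_{\varepsilon\to 0+}\frac{\ln l_{u}(\varepsilon)}{\ln(1/\varepsilon)}=\frac{\mathfrak{Di}(u)}{\alpha},$$
which is the claim. The only point requiring a little care --- and the closest thing here to an obstacle --- is this last asymptotic bookkeeping: one must check that the reparametrization $\delta=C\varepsilon^{\alpha}$ does not distort the $\limsup$ and that the constant $C$ contributes only a vanishing correction rather than rescaling the dimension. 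Everything else is a direct consequence of the H\"{o}lder estimate and the definition of $l_{u}$.
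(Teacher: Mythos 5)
Your proposal is correct and follows essentially the same route as the paper: the H\"older estimate shows that almost periods of $u$ are almost periods of $\chi\circ u$ at rescaled accuracy, giving $l_{\chi\circ u}(C\varepsilon^{\alpha})\leq l_{u}(\varepsilon)$ (the paper states the equivalent inequality $l_{\chi\circ u}(C\varepsilon)\leq l_{u}(\varepsilon^{1/\alpha})$), and the logarithmic bookkeeping absorbing the constant $C$ in the $\limsup$ is the same. No gaps.
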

\begin{proof}
	Let $\tau$ be an $\varepsilon^{\frac{1}{\alpha}}$-almost period of $u$. Then
	$$|(\chi \circ u )(t+\tau)-(\chi \circ u )(t)|_{F} \leq C|u(t+\tau)-u(t)|^{\alpha}_{E} \leq C \varepsilon, \text{ for any } t \in \mathbb{R}.$$
	In other words, any $\varepsilon^{\frac{1}{\alpha}}$-almost period of $u$ is a $C\varepsilon$-almost period of $\chi \circ u$. Thus,
	$$l_{\chi \circ u}(C\varepsilon) \leq l_{u}(\varepsilon^{\frac{1}{\alpha}}).$$
	After simple transformations, we have
	\begin{equation}
	\label{GelderTransLemmaEq}
	\frac{\ln l_{\chi \circ u}(C\varepsilon)}{\ln C + \ln 1 / C\varepsilon}=\frac{\ln l_{\chi \circ u}(C\varepsilon)}{\ln 1 / \varepsilon} \leq \frac{\ln l_{u}(\varepsilon^{\frac{1}{\alpha}})}{\ln 1 / \varepsilon} = \frac{\ln l_{u}(\varepsilon^{\frac{1}{\alpha}})}{\alpha \ln 1 / \varepsilon^{\frac{1}{\alpha}} }.
	\end{equation}
	Passing to the upper limit as $\varepsilon \to 0+$ in (\ref{GelderTransLemmaEq}), we get the statement.
\end{proof}
\begin{corollary}
	Let $\chi$ be a Bi-Lipschitz map, i.e. both $\chi$ and $\chi^{-1}$ satisfy the H\"{o}lder condition with exponent $1$; then
	\begin{equation}
\label{bilipschitz}
	\mathfrak{Di}(\chi \circ u) = \mathfrak{Di}(u).
	\end{equation}
\end{corollary}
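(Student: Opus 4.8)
The corollary follows immediately from the proposition by applying it twice.

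\medskip

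The plan is to use Proposition \ref{GelderTransLemma} in both directions. Since $\chi$ satisfies the H\"older condition with exponent $\alpha=1$, the proposition gives directly
\begin{equation*}
\mathfrak{Di}(\chi \circ u) \leq \frac{\mathfrak{Di}(u)}{1} = \mathfrak{Di}(u).
\end{equation*}
This is one of the two inequalities needed. The task then reduces to establishing the reverse inequality $\mathfrak{Di}(u) \leq \mathfrak{Di}(\chi \circ u)$.

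\medskip

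For the reverse inequality, I would observe that $u = \chi^{-1} \circ (\chi \circ u)$, so $u$ is the composition of the $F$-almost periodic function $\chi \circ u$ with the map $\chi^{-1}$, which by hypothesis also satisfies the H\"older condition with exponent $1$. Applying Proposition \ref{GelderTransLemma} to the function $\chi \circ u$ and the map $\chi^{-1}$ yields
\begin{equation*}
\mathfrak{Di}(u) = \mathfrak{Di}\bigl(\chi^{-1} \circ (\chi \circ u)\bigr) \leq \frac{\mathfrak{Di}(\chi \circ u)}{1} = \mathfrak{Di}(\chi \circ u).
\end{equation*}
Combining the two displayed inequalities gives the desired equality $\mathfrak{Di}(\chi \circ u) = \mathfrak{Di}(u)$.

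\medskip

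There is essentially no obstacle here, as the corollary is a formal consequence of the proposition. The only point worth checking is the implicit well-definedness of the composition $\chi^{-1} \circ (\chi \circ u)$: one must confirm that $\chi \circ u$ is indeed $F$-almost periodic (already noted in the text before the proposition) so that the proposition applies to it, and that $\chi^{-1}$ is defined on the range of $\chi \circ u$ so the Bi-Lipschitz hypothesis can be invoked. Both are guaranteed once $\chi$ is assumed bijective with both $\chi$ and $\chi^{-1}$ H\"older of exponent $1$.
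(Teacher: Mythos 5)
Your proof is correct and is exactly the intended argument: the paper leaves the corollary without an explicit proof precisely because it follows by applying Proposition \ref{GelderTransLemma} once to $\chi$ and once to $\chi^{-1}$ composed with $\chi\circ u$, as you do. Your closing remark about well-definedness on the range of $\chi\circ u$ is also consistent with the paper's own observation immediately after the corollary that the H\"older condition need only hold on the range of the function.
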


\noindent It is easy to see that the statement of proposition \ref{GelderTransLemma} holds if the inequality (\ref{GelderProp}) is satisfied only on the range of $u$, i.e. for $x, y \in u(\mathbb{R})$. Due to the pre-compactness of $u(\mathbb{R})$, to preserve the Diophantine dimension of $u$ it is enough for $\chi$ to be bijective and continuously differentiable in the Fr\'echet sense.

We also consider \textit{the lower Diophantine dimension} defined as
\begin{equation}
\label{Lowerdiophantinedimension}
\mathfrak{di}(u):=\liminf_{\varepsilon \to 0+}\frac{\ln l_{u}(\varepsilon)}{\ln 1 / \varepsilon}
\end{equation}
It is clear that all the properties considered above hold for the lower Diophantine dimension too.
\section{Diophantine dimension of a quasiperiodic function}

An $E$-almost periodic function $u(t) \sim \sum\limits U_{k}e^{i \lambda_{k}t}$ is called \textit{quasiperiodic} if there are rationally independent real numbers $\omega_{1},\ldots,\omega_{n}$ such that for all $k=1,2,\ldots$ the expansion
\begin{equation}
	\label{QuasiperRazl}
	\lambda_k = \sum\limits_{j=1}^{n}a^{(k)}_{j} \omega_j, \text{where } a^{(k)}_{j} \in \mathbb{Z}, j=1,2,\ldots,n.
\end{equation}
holds. In this case $\omega_1,\ldots,\omega_n$ are called \textit{frequencies}. The following theorem is well-known.

\begin{theorem}
	Let $u$ be a quasiperiodic function with frequencies $\omega_1,\ldots,\omega_n$; then there is 1-periodic\footnote{That means that $h$ is 1-periodic in each coordinate or, in other words, h is periodic w.r.t. the lattice $\mathbb{Z}^{n}$.} continuous function $h(t_{1},\ldots,t_{n})$, such that 
	\begin{equation}
		\label{QuasiPerPreds}
		u(t) = h\left(\frac{\omega_{1}}{2\pi}t,\ldots,\frac{\omega_{n}}{2\pi}t\right), \ t \in \mathbb{R}.
	\end{equation}
\end{theorem}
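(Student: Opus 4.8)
The plan is to realize $h$ on the torus $\mathbb{T}^{n}=\mathbb{R}^{n}/\mathbb{Z}^{n}$ as a uniform limit of trigonometric polynomials obtained by ``lifting'' the approximating polynomials of $u$, and to transfer the Cauchy property from $\mathbb{R}$ to $\mathbb{T}^{n}$ by means of the density of the linear winding guaranteed by Kronecker's theorem.

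First I would apply the Approximation Theorem \ref{th: Approximation-th} to obtain a sequence of trigonometric polynomials $P_{m}(t)=\sum_{k}B_{k}^{(m)}e^{i\lambda_{k}t}$ converging uniformly to $u$ on $\mathbb{R}$. Since $u$ is quasiperiodic, every exponent occurring here lies in $\Lambda(u)$ and therefore has the form $\lambda_{k}=\sum_{j=1}^{n}a_{j}^{(k)}\omega_{j}$ with $a_{j}^{(k)}\in\mathbb{Z}$ (the Bochner--Fej\'er construction behind the Approximation Theorem uses only the Fourier exponents of $u$). To each $P_{m}$ I associate the trigonometric polynomial on the torus
$$Q_{m}(\theta_{1},\ldots,\theta_{n}):=\sum_{k}B_{k}^{(m)}\exp\left(2\pi i\sum_{j=1}^{n}a_{j}^{(k)}\theta_{j}\right),$$
which is continuous and $1$-periodic in each $\theta_{j}$ and which, by construction, satisfies $P_{m}(t)=Q_{m}\left(\frac{\omega_{1}}{2\pi}t,\ldots,\frac{\omega_{n}}{2\pi}t\right)$, because $e^{2\pi i a_{j}^{(k)}(\omega_{j}/2\pi)t}=e^{i a_{j}^{(k)}\omega_{j}t}$.

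The heart of the argument is the identity $\sup_{\theta\in\mathbb{T}^{n}}|Q_{m}(\theta)-Q_{m'}(\theta)|_{E}=\sup_{t\in\mathbb{R}}|P_{m}(t)-P_{m'}(t)|_{E}$. Since $\omega_{1},\ldots,\omega_{n}$ are rationally independent, so are $\frac{\omega_{1}}{2\pi},\ldots,\frac{\omega_{n}}{2\pi}$, and hence by the Kronecker theorem \ref{th: Kronecker-Th} the orbit $\{(\frac{\omega_{1}}{2\pi}t,\ldots,\frac{\omega_{n}}{2\pi}t)\bmod\mathbb{Z}^{n}:t\in\mathbb{R}\}$ is dense in $\mathbb{T}^{n}$. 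The difference $Q_{m}-Q_{m'}$ is continuous on the compact torus, so $|Q_{m}-Q_{m'}|_{E}$ attains its supremum; by density of the orbit together with continuity, this supremum equals the supremum of $|Q_{m}-Q_{m'}|_{E}$ over the orbit, i.e. $\sup_{t}|P_{m}(t)-P_{m'}(t)|_{E}$, which is the claimed identity. Because $\{P_{m}\}$ converges uniformly it is uniformly Cauchy, so by the identity $\{Q_{m}\}$ is uniformly Cauchy in $C(\mathbb{T}^{n};E)$ and converges uniformly to a continuous, $\mathbb{Z}^{n}$-periodic function $h$. Passing to the limit $m\to\infty$ in $P_{m}(t)=Q_{m}\left(\frac{\omega_{1}}{2\pi}t,\ldots,\frac{\omega_{n}}{2\pi}t\right)$, with uniform convergence on both sides, gives $u(t)=h\left(\frac{\omega_{1}}{2\pi}t,\ldots,\frac{\omega_{n}}{2\pi}t\right)$, as required.

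I expect the main obstacle to be justifying the transfer identity rigorously: one must be certain that the approximating polynomials have exponents that are genuine integer combinations of the frequencies (so that the lift $Q_{m}$ is well defined and $\mathbb{Z}^{n}$-periodic), and that density of the winding line forces the torus-supremum and the real-line-supremum to coincide. Everything else --- the lifting, the uniform passage to the limit, and the periodicity of $h$ --- is then routine.
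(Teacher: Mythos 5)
Your proof is correct. The paper states this theorem as well-known and supplies no proof of its own, so there is nothing to compare against; your argument --- lifting the approximating polynomials (whose exponents lie in $\Lambda(u)$ and hence are integer combinations of the $\omega_j$) to $\mathbb{Z}^{n}$-periodic polynomials on $\mathbb{T}^{n}$ and transferring the uniform Cauchy property via the Kronecker density of the winding line --- is the standard construction of the representing function $h$ and is complete as written.
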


Further, we will deal with this "representing" function $h$.

\subsection{An absolute lower bound}

Consider the $n$-dimensional flat torus $\mathbb{T}^n=\mathbb{R}^n/\mathbb{Z}^n$ with the metric defined as follows. For given $\theta', \theta'' \in \mathbb{T}^n$ we set
\begin{equation}
\rho_{\mathbb{T}^n}(\theta',\theta''):=\min_{\theta_1 \in \theta', \theta_2 \in \theta''}\|\theta_1-\theta_2\|,
\end{equation}
where $\|.\|$ is the sup-norm in $\mathbb{R}^n$. We find it convenient to write $|\theta'-\theta''|_{\mathbb{T}^n}$ instead of $\rho_{\mathbb{T}^n}(\theta',\theta'')$\footnote{Considering the flat torus as an additive group one can see that the metric $\rho_{\mathbb{T}^n}$ is translation invariant. Thus, $|\theta|_{\mathbb{T}^n}:=\rho_{\mathbb{T}^n}(\theta,0)$ is well-defined.}. Further, we do not distinguish a vector in $\mathbb{R}^n$ and its class of equivalence (=the corresponding point on $\mathbb{T}^n$) It is easy to see that $|\theta'-\theta''|_{\mathbb{T}^n} = \max\limits_{j=1,\ldots,n}|\theta'_j -\theta''_j|_{\mathbb{T}^1}$, where $\theta'=(\theta'_1,\ldots,\theta'_n)$ and $\theta''=(\theta''_1,\ldots,\theta''_n)$. Also, the equality $\dim_{T}(\mathbb{T}^n)=\dim_{F}(\mathbb{T}^n) = n$ is not hard to prove.

We will need the following theorem.
\begin{theorem}[Kronecker's theorem]
	\label{th: Kronecker-Th}
	Let $\omega = (\omega_{1},\ldots, \omega_{n})$ be an $n$-tuple of rationally independent real numbers and $\theta \in \mathbb{T}^{n}$; then the inequality
	\begin{equation}
	\label{KroneckerSystem}
	|\omega t - \theta|_{\mathbb{T}^n} < \varepsilon
	\end{equation}
	has a solution $t=t_{\varepsilon} \in \mathbb{R}$ for every $\varepsilon>0$.
\end{theorem}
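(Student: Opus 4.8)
The plan is to deduce the statement from the density of the orbit $\mathcal{O} := \{\,\omega t \bmod \mathbb{Z}^n \mid t \in \mathbb{R}\,\}$ in $\mathbb{T}^n$: once $\mathcal{O}$ is known to be dense, for every $\theta \in \mathbb{T}^n$ and $\varepsilon>0$ there is automatically a $t_\varepsilon$ with $|\omega t_\varepsilon - \theta|_{\mathbb{T}^n} < \varepsilon$. Rather than proving density by hand, I would establish the stronger fact that $\mathcal{O}$ is equidistributed, i.e. that for every $f \in C(\mathbb{T}^n)$
\[
\lim_{T \to +\infty} \frac{1}{T}\int_{0}^{T} f(\omega t)\, dt = \int_{\mathbb{T}^n} f\, dm,
\]
where $m$ is the normalized Lebesgue measure on $\mathbb{T}^n$. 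Equidistribution forces density: if an open ball $B$ were disjoint from $\mathcal{O}$, choosing a nonnegative $f \in C(\mathbb{T}^n)$ supported in $B$ with $f \not\equiv 0$ makes the left-hand side vanish while the right-hand side is positive, a contradiction.

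By linearity it suffices to verify the limit on a family whose linear span is uniformly dense in $C(\mathbb{T}^n)$. I would take the characters $e_k(\theta) := \exp\bigl(2\pi i \sum_{j=1}^{n} k_j \theta_j\bigr)$, $k \in \mathbb{Z}^n$; their span is the algebra of trigonometric polynomials, which separates points and is closed under complex conjugation, hence is dense by the Stone--Weierstrass theorem. A routine $3\delta$ estimate then upgrades the limit from characters to arbitrary continuous $f$: approximate $f$ uniformly within $\delta$ by a trigonometric polynomial $P$, observe that both the time averages and the space averages of $f$ and $P$ differ by at most $\delta$ uniformly in $T$, and apply the character case to each monomial of $P$.

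The decisive computation is the character case. For $k=0$ both sides equal $1$. For $k \neq 0$ put $\langle k,\omega\rangle := \sum_{j=1}^{n} k_j \omega_j$, so that $e_k(\omega t) = e^{2\pi i \langle k,\omega\rangle t}$ and
\[
\frac{1}{T}\int_{0}^{T} e^{2\pi i \langle k,\omega\rangle t}\, dt = \frac{e^{2\pi i \langle k,\omega\rangle T} - 1}{2\pi i\, \langle k,\omega\rangle\, T},
\]
whose modulus is at most $\bigl(\pi T |\langle k,\omega\rangle|\bigr)^{-1} \to 0$, matching $\int_{\mathbb{T}^n} e_k\, dm = 0$. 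This is exactly where the hypothesis enters, and the only place it is used: rational independence of $\omega_1,\ldots,\omega_n$ says precisely that no nontrivial integer combination $\sum_j k_j \omega_j$ vanishes, so $\langle k,\omega\rangle \neq 0$ for every $k \neq 0$ and the division above is legitimate. The main obstacle is therefore conceptual rather than computational --- recognizing that \emph{rational independence} is identical to the statement that every nonzero character has nonzero frequency along the flow; with that identification the analytic steps are standard. An alternative route avoiding measure theory would note that $\overline{\mathcal{O}}$ is a closed connected subgroup of $\mathbb{T}^n$, hence a subtorus, and that a proper subtorus is annihilated by some character $e_k$ with $k\neq 0$, again forcing $\langle k,\omega\rangle = 0$; but the equidistribution argument is more self-contained and dovetails with Proposition \ref{ApDistribStat}.
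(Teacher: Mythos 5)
The paper states Kronecker's theorem without proof --- it is quoted as a classical result from the literature --- so there is no argument of the author's to compare yours against; I can only assess your proposal on its own merits, and it is correct and complete. The Weyl equidistribution route is sound: the character computation $\frac{1}{T}\int_0^T e^{2\pi i \langle k,\omega\rangle t}\,dt = \frac{e^{2\pi i \langle k,\omega\rangle T}-1}{2\pi i \langle k,\omega\rangle T} \to 0$ is valid precisely because rational independence of $\omega_1,\dots,\omega_n$ guarantees $\langle k,\omega\rangle \neq 0$ for every nonzero $k \in \mathbb{Z}^n$, the Stone--Weierstrass plus $3\delta$ upgrade to general $f \in C(\mathbb{T}^n)$ is standard, and equidistribution trivially implies density of the orbit, which is exactly the assertion of the theorem. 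One point worth making explicit, since you identify it only implicitly: this is the \emph{continuous-time} version of Kronecker's theorem, for which rational independence of $\omega_1,\dots,\omega_n$ alone suffices; in the discrete version (density of $\{\omega m \bmod \mathbb{Z}^n : m \in \mathbb{Z}\}$) one must instead assume that $1,\omega_1,\dots,\omega_n$ are rationally independent, because the relevant character sum involves frequencies $\langle k,\omega\rangle \bmod 1$ rather than $\langle k,\omega\rangle$ itself. Your proof correctly uses the weaker hypothesis appropriate to the flow. The argument also proves strictly more than is needed (equidistribution rather than mere density), which is not wasted effort here: it is precisely the content of Proposition \ref{ApDistribStat}, so your approach in effect supplies a proof of that proposition as well.
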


If $h(t_1,\ldots,t_n)$ is a $1$-periodic function of real variables, then it is natural to consider $h$ as a function with domain $\mathbb{T}^n$. So, the value $h(t_1+\theta_1,\ldots,t_n+\theta_n)$ is obviously defined for $\theta=(\theta_1,\ldots,\theta_n) \in \mathbb{T}^n$.

A function $h \colon \mathbb{T}^n \to E$ satisfies a \textit{reverse H\"{o}lder condition} with an exponent $\alpha \in (0,1]$ at a point $\theta_{0} \in \mathbb{T}^n$ if for some constants $\varepsilon_{0}>0$ and $C>0$ such that
\begin{equation}
\label{PointHoelderCondition}
|h(\theta)-h(\theta_0)|_{E} \geq  C |\theta_0-\theta|^{\alpha}_{\mathbb{T}^{n}}
\end{equation}
provided that $|\theta_0-\theta|_{\mathbb{T}^n} \leq \varepsilon_0$. And we say that $h$ satisfies a \textit{locally H\"{o}lder condition} with an exponent $\alpha \in (0,1]$ if for some constants $\varepsilon_{0}>0$ and $C>0$ such that
\begin{equation}
|h(\theta')-h(\theta'')|_{E} \leq  C |\theta'-\theta''|^{\alpha}_{\mathbb{T}^{n}}
\end{equation}
provided that $|\theta'-\theta''|_{\mathbb{T}^{n}} \leq \varepsilon_0$.

We call a function $h \colon \mathbb{T}^{n} \to E$ \textit{strictly $1$-periodic} if the equality $h(\theta+\theta')=h(\theta)$ for all $\theta \in \mathbb{T}^n$ implies that $\theta' = 0.$

\begin{lemma}
	\label{FreqPeriodicity}
	Let $u(t)=h(\omega_1t,\ldots,\omega_n t)$ be an $E$-quasiperiodic function such that $2\pi \omega_{1},\ldots,2\pi\omega_{n} \in \Lambda(u)$; then $h$ is strictly $1$-periodic. 
\end{lemma}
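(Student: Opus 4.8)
The plan is to encode the hypothesis $2\pi\omega_1,\dots,2\pi\omega_n\in\Lambda(u)$ as the non-vanishing of certain Fourier coefficients of $h$ on the torus, and then to run a short translation argument on those coefficients. Throughout I write $m\cdot\theta=\sum_{j=1}^{n}m_j\theta_j$ for $m\in\mathbb{Z}^n$, $\theta\in\mathbb{T}^n$, and $t\omega=(\omega_1 t,\dots,\omega_n t)$; recall that the frequencies $\omega_1,\dots,\omega_n$ are rationally independent, since $u$ is quasiperiodic.

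First I would introduce the Fourier coefficients of the continuous representing function $h\colon\mathbb{T}^n\to E$,
\[
H_m:=\int_{\mathbb{T}^n}h(\theta)e^{-2\pi i(m\cdot\theta)}\,d\theta,\qquad m\in\mathbb{Z}^n,
\]
and recall the uniqueness theorem on $\mathbb{T}^n$: a continuous function is determined by its Fourier coefficients. The central step is to identify these with the Bohr--Fourier coefficients of $u=h(t\omega)$. Since $\omega_1,\dots,\omega_n$ are rationally independent, the Kronecker flow $t\mapsto t\omega$ on $\mathbb{T}^n$ is uniquely ergodic with respect to Haar measure, so by Weyl's equidistribution theorem, for every $m\in\mathbb{Z}^n$,
\[
\lim_{T\to+\infty}\frac{1}{T}\int_{0}^{T}u(t)e^{-i(2\pi(m\cdot\omega))t}\,dt=\int_{\mathbb{T}^n}h(\theta)e^{-2\pi i(m\cdot\theta)}\,d\theta=H_m .
\]
Consequently, the Bohr--Fourier coefficient of $u$ at the exponent $2\pi\omega_j=2\pi(e_j\cdot\omega)$ is exactly $H_{e_j}$ (here $e_j$ is the $j$-th standard basis vector), and by rational independence distinct lattice points $m$ produce distinct exponents $2\pi(m\cdot\omega)$. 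Hence the hypothesis $2\pi\omega_j\in\Lambda(u)$, i.e. the non-vanishing of this coefficient, forces $H_{e_j}\neq 0$ for every $j=1,\dots,n$.

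Finally I would assume $h(\theta+\theta')=h(\theta)$ for all $\theta\in\mathbb{T}^n$ and some $\theta'=(\theta'_1,\dots,\theta'_n)$, and compare Fourier coefficients. Using the translation invariance of Haar measure, the coefficient of $h(\cdot+\theta')$ at $m$ equals $e^{2\pi i(m\cdot\theta')}H_m$, so the uniqueness theorem forces $e^{2\pi i(m\cdot\theta')}H_m=H_m$ for every $m$. Taking $m=e_j$ and using $H_{e_j}\neq 0$ gives $e^{2\pi i\theta'_j}=1$, i.e. $\theta'_j\in\mathbb{Z}$, hence $\theta'_j=0$ in $\mathbb{T}^1$; as this holds for each $j$ we conclude $\theta'=0$, which is exactly the strict $1$-periodicity of $h$.

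The step I expect to be the main obstacle is the rigorous identification of $H_m$ with the Bohr coefficient of $u$ at $2\pi(m\cdot\omega)$. The Fourier series of the merely continuous $h$ need not converge, so one cannot simply substitute $\theta=t\omega$ and match terms; the honest justification passes through unique ergodicity / Weyl equidistribution of the Kronecker flow, and this is exactly where the rational independence of the frequencies is used. Once this correspondence is in hand, everything else is routine manipulation of Fourier coefficients.
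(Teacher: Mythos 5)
Your proof is correct, and it lands on the same algebraic punchline as the paper --- translation by $\theta'$ multiplies the coefficient attached to the exponent $2\pi(m\cdot\omega)$ by the phase $e^{2\pi i (m\cdot\theta')}$, and non-vanishing of the coefficients at $m=e_1,\ldots,e_n$ forces $\theta'=0$ --- but you reach it through different machinery. The paper stays entirely inside Bohr's theory: it sets $v(t)=h(\omega_1 t+\theta'_1,\ldots,\omega_n t+\theta'_n)$, uses the Kronecker theorem to realize $v$ as a uniform limit of shifts $u_{t_k}$ (so the Bohr--Fourier coefficients of $v$ are $U_k e^{i\sum_j a^{(k)}_j 2\pi\theta'_j}$), and then applies the uniqueness theorem for almost periodic functions to the identity $u=v$; the torus Fourier coefficients of $h$ never appear. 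You instead work with the classical coefficients $H_m$ of $h$ on $\mathbb{T}^n$ and translate the hypothesis into $H_{e_j}\neq 0$ via unique ergodicity of the Kronecker flow. The cost of your route is exactly the step you flag: identifying $H_m$ with the Bohr coefficient of $u$ at $2\pi(m\cdot\omega)$ requires Weyl equidistribution, plus a word about $E$-valued integrands (the Bohr mean of $u(t)e^{-i2\pi(m\cdot\omega)t}$ exists in $E$ by almost periodicity, and its value is pinned down by applying bounded linear functionals to reduce to the scalar case, or by Fej\'er approximation of $h$ by $E$-valued trigonometric polynomials). In exchange you get a slightly sharper picture: your identification shows that the Bohr spectrum of $u$ is precisely $\{2\pi(m\cdot\omega) : H_m\neq 0\}$, which makes transparent what the hypothesis $2\pi\omega_j\in\Lambda(u)$ is actually asserting. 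One cosmetic remark: you do not need a uniqueness theorem to conclude $e^{2\pi i(m\cdot\theta')}H_m=H_m$; equality of the functions $h(\cdot+\theta')$ and $h$ gives equality of their coefficients directly. Both arguments are sound.
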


\begin{proof}
	Let 
	\begin{equation}
	u(t) \sim \sum\limits_{k=1}^{\infty}U_{k} e^{i\lambda_{k} t} \sim \sum\limits_{k=1}^{\infty}U_{k} e^{i \sum\limits_{j=1}^{n} a^{(k)}_{j} 2\pi\omega_{j} t}.
	\end{equation}
	Let $\theta' \in \mathbb{T}^n$ and $h(\theta + \theta')=h(\theta)$ for all $\theta \in \mathbb{T}^n$. Consider $v(t) = h(\omega_1 t + \theta'_1, \ldots, \omega_n t + \theta'_n)$. By the Kronecker theorem there is a sequence $t_{k} \in \mathbb{R}, k=1,2,\ldots$ such that $\omega_{j} t_{k} \to \theta'_{j} \pmod 1$ and, consequently, $u_{t_k} \to v(t)$ uniformly on $\mathbb{R}$. In particular, $v(t)$ is almost periodic and
	\begin{equation}
	v(t) \sim \sum\limits_{k=1}^{\infty}U_{k} e^{\sum\limits_{j=1}^{n} a^{(k)}_j 2\pi \theta'_{j}} e^{i\lambda_{k} t}.
	\end{equation}
	From the equality $u(t)=v(t)$ for all $t \in \mathbb{R}$ and the uniqueness theorem we have that for all $k=1,2,\ldots$
	\begin{equation}
	U_{k} = U_{k} e^{\sum\limits_{j=1}^{n}a^{(k)}_{j} 2\pi \theta'_{j}}.
	\end{equation}
	In other words, $\sum\limits_{j=1}^{n}a^{(k)}_j 2\pi \theta'_j = 0 \pmod{2\pi}$ or $\sum\limits_{j=1}^{n}a^{(k)}_j \theta'_j = 0 \pmod 1$. Since there are $k_{1},\ldots,k_{n}$ such that $2\pi\omega_{j} = \lambda_{k_{j}}$ for $j=1,\ldots,n$ we have that $\theta'_{j}=0$ for all $j=1,\ldots,n$.
\end{proof}

\begin{theorem}
	\label{HullDimensionsTh}
	Let $u(t)=h(\omega_{1}t,\ldots,\omega_{n} t)$ be an $E$-quasiperiodic function such that $h$ is strictly $1$-periodic; then
	\begin{enumerate}
		\item  $\dim_{T}\mathcal{H}(u)=n$;
		\item If $h$ satisfies the locally H\"{o}lder condition with an exponent $\alpha$, then $\udim(\mathcal{H}(u)) \leq \frac{n}{\alpha}$;
		\item If $h$ satisfies the reverse H\"{o}lder condition with an exponent $\alpha$ at a point $\theta_{0}$, then $\ldim(\mathcal{H}(u)) \geq \frac{n}{\alpha}$.
	\end{enumerate}
\end{theorem}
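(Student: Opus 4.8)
The plan is to realise the hull $\mathcal{H}(u)$ as a homeomorphic image of the torus $\mathbb{T}^n$ and then transport all three estimates to $\mathbb{T}^n$ equipped with a metric pulled back from the sup-norm. Define $\Phi \colon \mathbb{T}^n \to C_b(\mathbb{R};E)$ by $\Phi(\theta)(t) := h(\omega_1 t + \theta_1, \ldots, \omega_n t + \theta_n)$. First I would check that $\Phi$ is continuous: since $h$ is continuous on the compact torus it is uniformly continuous, so a small change in $\theta$ changes $\Phi(\theta)$ by a small amount uniformly in $t$. Taking $\theta = (\omega_1\tau,\ldots,\omega_n\tau) \bmod 1$ gives $\Phi(\theta) = u_\tau$, so $\Phi(\mathbb{T}^n)$ contains the orbit of $u$; being a continuous image of a compact set it is closed and hence contains $\mathcal{H}(u)$. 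Conversely, Kronecker's theorem (theorem \ref{th: Kronecker-Th}) provides, for each $\theta$, a sequence $\tau_k$ with $(\omega_1\tau_k,\ldots,\omega_n\tau_k) \to \theta$ on $\mathbb{T}^n$, whence $u_{\tau_k} \to \Phi(\theta)$, so $\Phi(\mathbb{T}^n) = \mathcal{H}(u)$. Injectivity is where strict $1$-periodicity enters: if $\Phi(\theta') = \Phi(\theta'')$ then $h(\omega_1 t + \theta',\ldots) = h(\omega_1 t + \theta'',\ldots)$ for all $t$; by density of the orbit (Kronecker) and continuity this forces $h(\eta + (\theta''-\theta')) = h(\eta)$ for every $\eta$, so $\theta'' - \theta' = 0$. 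A continuous bijection from a compact space onto a Hausdorff space is a homeomorphism, which immediately yields statement (1), since the topological dimension is a topological invariant and $\dim_T(\mathbb{T}^n) = n$ was already recorded.

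For (2) and (3) I would work with the pullback metric $\tilde\rho(\theta',\theta'') := \sup_{t\in\mathbb{R}} |\Phi(\theta')(t) - \Phi(\theta'')(t)|_E$, so that $\Phi$ is by construction an isometry and the box dimensions of $\mathcal{H}(u)$ coincide with those of $(\mathbb{T}^n, \tilde\rho)$. The key simplification is that, again by density of the orbit on $\mathbb{T}^n$ and continuity of $h$, the supremum over $t$ becomes a supremum over the whole torus:
\begin{equation}
\tilde\rho(\theta',\theta'') = \sup_{\eta \in \mathbb{T}^n} |h(\eta) - h(\eta + (\theta'' - \theta'))|_E,
\end{equation}
which exhibits $\tilde\rho$ as a translation-invariant metric depending only on $\theta'' - \theta'$, and in particular shows that $\tilde\rho$ generates the standard topology of $\mathbb{T}^n$ (so that lemma \ref{LocalEstimatesLemma} is applicable).

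To prove (2) I would invoke the locally H\"older bound: for $|\theta''-\theta'|_{\mathbb{T}^n} \leq \varepsilon_0$ each term in the supremum above is $\leq C|\theta''-\theta'|^\alpha_{\mathbb{T}^n}$, so $\tilde\rho(\theta',\theta'') \leq C\,\rho_{\mathbb{T}^n}^\alpha(\theta',\theta'')$ near the diagonal, i.e. the metric $\rho_{\mathbb{T}^n}^\alpha$ of example \ref{Example1} is locally stronger than $\tilde\rho$. Lemma \ref{LocalEstimatesLemma} upgrades this to global strength, and lemma \ref{FractalDimEstimateLemma} together with example \ref{Example1} then give $\udim(\mathcal{H}(u)) = \udim(\mathbb{T}^n,\tilde\rho) \leq \udim(\mathbb{T}^n,\rho_{\mathbb{T}^n}^\alpha) = n/\alpha$. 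For (3) I would instead take $\eta = \theta_0$ in the supremum; the reverse H\"older condition at $\theta_0$ yields, for $|\theta''-\theta'|_{\mathbb{T}^n} \leq \varepsilon_0$, the bound $\tilde\rho(\theta',\theta'') \geq |h(\theta_0) - h(\theta_0 + (\theta''-\theta'))|_E \geq C|\theta''-\theta'|^\alpha_{\mathbb{T}^n}$, so now $\tilde\rho$ is locally, hence globally, stronger than $\rho_{\mathbb{T}^n}^\alpha$, and the same two results give $\ldim(\mathcal{H}(u)) \geq \ldim(\mathbb{T}^n,\rho_{\mathbb{T}^n}^\alpha) = n/\alpha$. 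The point that requires care is that the reverse H\"older hypothesis is assumed only at the single point $\theta_0$; the translation invariance displayed above is exactly what converts this one-point estimate into a uniform lower bound for $\tilde\rho$, and verifying this passage — together with the identification of $\tilde\rho$ as a genuine metric inducing the correct topology — is the technical heart of the argument.
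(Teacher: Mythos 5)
Your proposal is correct and follows essentially the same route as the paper: the paper constructs the homeomorphism in the opposite direction (a map $\chi\colon\mathcal{H}(u)\to\mathbb{T}^n$ rather than your $\Phi=\chi^{-1}$), but the substance is identical — identify the hull with $\mathbb{T}^n$, pull back the sup-metric to a translation-invariant metric on the torus, use Kronecker's theorem to replace the supremum over $t$ by a supremum over $\mathbb{T}^n$, and then apply the local-to-global comparison lemma together with the $\rho^\alpha$ rescaling of the box dimensions. Your explicit verification that the pullback metric generates the standard topology (needed for lemma \ref{LocalEstimatesLemma}) is a point the paper leaves implicit, but it is not a different argument.
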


\begin{proof}
	Let $v \in \mathcal{H}(u) $. Then there is a sequence $t_{k}$, $k=1,2,\ldots$, $u_{t_{k}} \to v$ uniformly on $\mathbb{R}$ as $k\to +\infty$. One can find a subsequence $t'_{k} \subset t_{k}$ such that for some $\theta_{1},\ldots,\theta_{n} \in [0,1)$ (or $\theta=(\theta_{1},\ldots,\theta_{n}) \in \mathbb{T}^n$) the following
	\begin{equation}
	\label{hulldesclimits}
	\omega_{j} t'_{k} \to \theta_j \pmod{1}, \ \ (j=1,\ldots,n).
	\end{equation}
	holds. It is clear that $v(t) = h(\omega_{1} t + \theta_{1},\ldots,\omega_{n} t + \theta_{n})$. Consider the map $\chi \colon \mathcal{H}(u) \to \mathbb{T}^n$ such that $\chi(v) := (\theta_1,\ldots,\theta_n)$, where $\theta_{1},\ldots,\theta_{n}$ defined in (\ref{hulldesclimits})\footnote{Due to strictly $1$-periodicity the numbers $\theta_{1},\ldots,\theta_{n}$ are determined uniquely.}. By the Kronecker theorem, $\chi$ is a surjection and, as a continuous bijective map between compact metric spaces, is a homeomorphism. So, we get item (1) of the theorem.
	
	Consider a new metric on $\mathbb{T}^n$, which is induced by $\chi$, i.e. for $\theta', \theta'' \in \mathbb{T}^n$ we set
	\begin{equation}
	\label{inducedtorusmetric}
	\rho'(\theta', \theta''):= \sup_{t \in \mathbb{R}} |h(\omega_{1} t + \theta'_{1},\ldots,\omega_{n} t + \theta'_{n}) - h(\omega_{1} t + \theta''_{1},\ldots,\omega_{n} t + \theta''_{n})|_{E}.
	\end{equation}
	\noindent	By the Kronecker theorem and from the continuity of $h$, we have
	\begin{equation}
	\label{FinalInducedMetric}
	\rho'(\theta', \theta'') = \sup_{\theta \in \mathbb{T}^n}|h(\theta+\theta') - h(\theta+\theta'')|_{E}.
	\end{equation}
	
	So, within the condition of item (2) of the theorem, we have $\rho'(\theta', \theta'') \leq C |\theta'-\theta''|^{\alpha}_{\mathbb{T}^{n}}$
	provided by $|\theta'-\theta''|_{\mathbb{T}^{n}} \leq \varepsilon_0$.
	In other words, the metric $\rho^{\alpha}_{\mathbb{T}^n}$ is stronger than the metric $\rho'$ with respect to a cover generated by open balls of radius  $\frac{\varepsilon_0}{2}$ in the metric $\rho_{\mathbb{T}^n}$. Using lemma \ref{LocalEstimatesLemma}, example \ref{Example1} and lemma \ref{FractalDimEstimateLemma}, we finish the proof of item (2).
	
	Within the conditions of item (3) we have $\rho'(\theta', \theta'') \geq C |\theta'-\theta''|^{\alpha}_{\mathbb{T}^{n}}$\footnote{To get this one has to put $\theta = \theta_0 - \theta'$ in (\ref{FinalInducedMetric}).} provided by $|\theta'-\theta''|_{\mathbb{T}^{n}} \leq \varepsilon_{0}$. The further is analogous to the proof of item (2).
\end{proof}  

\begin{remark}
	Item (1) of the theorem \ref{HullDimensionsTh} is a special case of the results of M. L. Cartwright (see theorem 8 in \cite{Cartwright1967}). Parts (2) and (3) is a generalization of results from \cite{Anikushin2016} (see corollary \ref{PolynomialLowerEstimate}). The following lemma is an easy generalization of lemma 2 in \cite{Anikushin2016}, the same idea is contained in the proof of theorem 1 in \cite{Naito1996}.
\end{remark}

\begin{proposition}
	Let $u$ be an $E$-almost periodic function. For each $\varepsilon>0$ let $\delta(\varepsilon)>0$ be any number such that $|u(t)-u(s)|_{E} \leq \varepsilon$ provided by $|t-s|\leq \delta(\varepsilon)$; then
	\begin{equation}
	\label{LowerDimEstimate}
	\ldim(\mathcal{H}(u)) \leq \mathfrak{di}(u) + \liminf\limits_{\varepsilon \to 0+}\frac{\ln 1 / \delta(\varepsilon)}{\ln 1 /\varepsilon}.
	\end{equation}
\end{proposition}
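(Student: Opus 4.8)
The plan is to estimate the covering number $N_\varepsilon(\mathcal{H}(u))$ of the hull, computed in its natural sup-metric $d(v,w)=\sup_{t\in\mathbb{R}}|v(t)-w(t)|_E$, by exhibiting an explicit finite net built entirely from time-translates $u_\tau=u(\cdot+\tau)$, and then to feed this into the definition of $\ldim$. Since $\{u_\tau:\tau\in\mathbb{R}\}$ is dense in $\mathcal{H}(u)$, it suffices to cover this family: a net that covers the translates up to radius $2\varepsilon$ covers the whole hull up to radius $3\varepsilon$, and such constant factors are irrelevant for the box dimension.

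First I would reduce the covering of all translates to those with parameter in a bounded window. Fix $\varepsilon>0$ and set $L=l_u(\varepsilon)$. Given $\tau\in\mathbb{R}$, the defining property of the inclusion length applied with $a=-\tau$ produces an $\varepsilon$-almost period $\sigma\in[-\tau,-\tau+L]$, and translation invariance of the sup-norm gives $d(u_\tau,u_{\tau+\sigma})=\sup_{s}|u(s)-u(s+\sigma)|_E\le\varepsilon$ with $\tau+\sigma\in[0,L]$. Hence the translates with parameter in $[0,l_u(\varepsilon)]$ are $\varepsilon$-dense among all translates. Next I would net this window using the modulus control $\delta$: whenever $|\tau'-\tau''|\le\delta(\varepsilon)$ one has $d(u_{\tau'},u_{\tau''})=\sup_t|u(t+\tau')-u(t+\tau'')|_E\le\varepsilon$, directly from the definition of $\delta(\varepsilon)$. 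Therefore a uniform grid on $[0,L]$ of spacing $\delta(\varepsilon)$, with at most $\lceil l_u(\varepsilon)/\delta(\varepsilon)\rceil+1$ points, yields translates that $\varepsilon$-cover $\{u_{\tau'}:\tau'\in[0,L]\}$. Composing the two steps, every translate lies within $2\varepsilon$ of a grid translate, so by density the grid $3\varepsilon$-covers $\mathcal{H}(u)$, giving $N_{3\varepsilon}(\mathcal{H}(u))\le\lceil l_u(\varepsilon)/\delta(\varepsilon)\rceil+1$.

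Finally I would pass to the limit. Taking logarithms and using $\ln(1/3\varepsilon)=\ln(1/\varepsilon)+O(1)$ together with $\ln\!\big(\lceil l_u(\varepsilon)/\delta(\varepsilon)\rceil+1\big)=\ln l_u(\varepsilon)+\ln(1/\delta(\varepsilon))+o(\ln(1/\varepsilon))$, one obtains $\frac{\ln N_{3\varepsilon}(\mathcal{H}(u))}{\ln(1/3\varepsilon)}\le\frac{\ln l_u(\varepsilon)}{\ln(1/\varepsilon)}+\frac{\ln(1/\delta(\varepsilon))}{\ln(1/\varepsilon)}+o(1)$, and hence, since $3\varepsilon$ exhausts all small radii, $\ldim(\mathcal{H}(u))\le\liminf_{\varepsilon\to0+}\big(\frac{\ln l_u(\varepsilon)}{\ln(1/\varepsilon)}+\frac{\ln(1/\delta(\varepsilon))}{\ln(1/\varepsilon)}\big)$.

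The main obstacle is precisely this last inequality. The covering naturally delivers the lower limit of the \emph{sum} of the two quotients, whereas the statement splits it as $\mathfrak{di}(u)+\liminf(\cdots)$, i.e. as the sum of the two \emph{separate} lower limits. Since in general $\liminf(a_\varepsilon+b_\varepsilon)\ge\liminf a_\varepsilon+\liminf b_\varepsilon$, the two forms agree exactly when the modulus quotient $b_\varepsilon=\frac{\ln(1/\delta(\varepsilon))}{\ln(1/\varepsilon)}$ has a genuine limit as $\varepsilon\to0+$ — which is the situation in the intended applications (Hölder or power-type moduli, where $b_\varepsilon\to1/\alpha$). I would therefore either record the sharp bound $\ldim(\mathcal{H}(u))\le\liminf_{\varepsilon\to0+}(a_\varepsilon+b_\varepsilon)$ and note that it collapses to the stated inequality whenever $\lim_\varepsilon b_\varepsilon$ exists, or attempt to decouple the two constructions by taking the almost-period radius and the grid spacing from different sequences. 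The latter attempt fails in general: forcing the target radius to $0$ ties both scales to a common logarithmic rate, so one cannot simultaneously realize the two independent lower limits. This decoupling is the genuine difficulty hidden in the clean form of the assertion.
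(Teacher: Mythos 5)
Your construction is exactly the intended one (the paper does not prove this proposition itself but defers to Lemma~2 of \cite{Anikushin2016} and the proof of Theorem~1 of \cite{Naito1996}, both of which cover the hull by the translates $u_\tau$ with $\tau$ running over a $\delta(\varepsilon)$-grid in a single inclusion window $[0,l_u(\varepsilon)]$), and every step of it is sound: the reduction to translates by density, the $\varepsilon$-density of the window translates via an almost period in $[-\tau,-\tau+L]$, the grid count $\lceil l_u(\varepsilon)/\delta(\varepsilon)\rceil+1$, and the passage to $N_{3\varepsilon}$. The issue you isolate at the end is also real and you have diagnosed it correctly: the covering argument delivers $\ldim(\mathcal{H}(u))\leq\liminf_{\varepsilon\to0+}\bigl(\tfrac{\ln l_u(\varepsilon)}{\ln(1/\varepsilon)}+\tfrac{\ln(1/\delta(\varepsilon))}{\ln(1/\varepsilon)}\bigr)$, which is in general \emph{larger} than the right-hand side of (\ref{LowerDimEstimate}), since a single sequence $\varepsilon_n\to0$ must realize both lower limits simultaneously; and your observation that the two scales cannot be decoupled is correct, because taking the almost-period scale and the grid scale different forces the covering radius to the larger of the two, which inflates whichever quotient uses the smaller scale. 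So the defect lies in the formulation of the proposition rather than in your argument: the honest conclusion of this method is the bound with $\liminf$ of the sum, and it reduces to the stated form precisely when $\tfrac{\ln(1/\delta(\varepsilon))}{\ln(1/\varepsilon)}$ converges. This is harmless for everything the paper does with the proposition — in the corollary that follows one takes $\delta(\varepsilon)=C\varepsilon^{1/\alpha}$, so the second quotient tends to $1/\alpha$ and the two forms coincide — but if you want the statement in its stated generality you should either record the sharp coupled bound or add the hypothesis that the limit of the modulus quotient exists. I would keep your version with the explicit remark; it is the correct and complete form of the result this method can give.
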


\begin{corollary}
	Let $u=h(\omega_{1} t,\ldots,\omega_{n} t)$ be a quasiperiodic function and $h$ satisfies the conditions of items (2) and (3) of theorem \ref{HullDimensionsTh}; then
	\begin{equation}
	\mathfrak{di}(u) \geq \frac{n-1}{\alpha}.
	\end{equation}
\end{corollary}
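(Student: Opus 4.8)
The plan is to feed the box-dimension bounds of Theorem \ref{HullDimensionsTh} into the preceding proposition. Working under the standing hypothesis of that theorem (so that $h$ is strictly $1$-periodic and the theorem applies), item (3) supplies the lower bound $\ldim(\mathcal{H}(u)) \geq \frac{n}{\alpha}$, while the proposition gives the upper estimate
\[
\ldim(\mathcal{H}(u)) \leq \mathfrak{di}(u) + \liminf_{\varepsilon \to 0+}\frac{\ln 1 / \delta(\varepsilon)}{\ln 1 /\varepsilon}
\]
for any admissible modulus of continuity $\delta(\varepsilon)$ of $u$. Combining the two inequalities, it suffices to exhibit a single $\delta(\varepsilon)$ for which the liminf term is at most $\frac{1}{\alpha}$; the claimed bound $\mathfrak{di}(u) \geq \frac{n-1}{\alpha}$ then drops out at once.

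First I would manufacture such a $\delta$ from the local H\"older condition of item (2). Writing $\omega t := (\omega_{1}t,\ldots,\omega_{n}t)$, the straight-line orbit on the torus is $\|\omega\|$-Lipschitz: since the circle distance never exceeds the absolute value, $|\omega t - \omega s|_{\mathbb{T}^n} = \max_{j}|\omega_{j}(t-s)|_{\mathbb{T}^1} \leq \|\omega\|\,|t-s|$, where $\|\omega\| = \max_{j}|\omega_{j}|$. Hence, as soon as $|t-s| \leq \varepsilon_0/\|\omega\|$ (so that the torus increment stays within the radius $\varepsilon_0$ on which item (2) is valid), the local H\"older inequality yields
\[
|u(t)-u(s)|_{E} = |h(\omega t)-h(\omega s)|_{E} \leq C|\omega t - \omega s|^{\alpha}_{\mathbb{T}^n} \leq C\|\omega\|^{\alpha}|t-s|^{\alpha}.
\]
Consequently $|u(t)-u(s)|_{E} \leq \varepsilon$ holds whenever $|t-s| \leq C^{-1/\alpha}\|\omega\|^{-1}\varepsilon^{1/\alpha}$, and for all sufficiently small $\varepsilon$ this quantity is itself below $\varepsilon_0/\|\omega\|$. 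Thus one may take $\delta(\varepsilon) = C''\varepsilon^{1/\alpha}$ with $C'' = C^{-1/\alpha}\|\omega\|^{-1}$ for small $\varepsilon$.

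With this choice $\ln(1/\delta(\varepsilon)) = -\ln C'' + \frac{1}{\alpha}\ln(1/\varepsilon)$, so that
\[
\frac{\ln 1 / \delta(\varepsilon)}{\ln 1 / \varepsilon} = \frac{-\ln C''}{\ln 1 / \varepsilon} + \frac{1}{\alpha}
\]
tends to $\frac{1}{\alpha}$ as $\varepsilon \to 0+$; in particular the liminf term is at most $\frac{1}{\alpha}$. Substituting into the chain $\frac{n}{\alpha} \leq \ldim(\mathcal{H}(u)) \leq \mathfrak{di}(u) + \frac{1}{\alpha}$ yields $\mathfrak{di}(u) \geq \frac{n-1}{\alpha}$, as desired.

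The argument is essentially routine once the proposition is available; the only point needing care is the passage to the limit, namely restricting to small $\varepsilon$ so that the torus increment $\|\omega\|\,|t-s|$ genuinely lands in the neighborhood of radius $\varepsilon_0$ where the local H\"older estimate of item (2) holds. Note the division of labor: item (3) is used only for the lower bound on $\ldim(\mathcal{H}(u))$, whereas item (2) is used to control the modulus of continuity. The loss from $n$ to $n-1$ is precisely the additive $\frac{1}{\alpha}$ produced by the one-dimensional time parametrization of the $n$-dimensional torus orbit.
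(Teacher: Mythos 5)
Your proposal is correct and follows essentially the same route as the paper: the author likewise combines item (3) of Theorem \ref{HullDimensionsTh} with the proposition's estimate (\ref{LowerDimEstimate}), taking $\delta(\varepsilon) = C\varepsilon^{1/\alpha}$ from the local H\"older condition so that the liminf term equals $\frac{1}{\alpha}$. Your write-up merely makes explicit the Lipschitz passage from the time parameter to the torus metric and the restriction to small $\varepsilon$, which the paper leaves implicit.
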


\begin{proof}
	For sufficiently small $\varepsilon>0$ we put $\delta(\varepsilon):=C\varepsilon^{\frac{1}{\alpha}}$. Using (\ref{LowerDimEstimate}) we finish the proof.
\end{proof}

\begin{corollary}
	\label{PolynomialLowerEstimate}
	Let $P(t) = \sum\limits_{k=1}^{n}A_{k} e^{i 2\pi \omega_{k} t}$ be a trigonometric polynomial, where $A_{1},\ldots,A_{n}$ are non zero vectors in $E$ and $\omega_{1},\ldots,\omega_{n}$ are rationally independent; then $\mathfrak{di}(P) \geq n-1.$
\end{corollary}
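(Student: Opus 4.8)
The plan is to recognize $P$ as a quasiperiodic function of the type treated in this section and then feed the \emph{topological} dimension of its hull into inequality (\ref{LowerDimEstimate}). Concretely, I would set $h(\theta_1,\ldots,\theta_n) := \sum_{k=1}^{n} A_k e^{i 2\pi \theta_k}$, so that $P(t) = h(\omega_1 t,\ldots,\omega_n t)$. This $h$ is a finite trigonometric sum on $\mathbb{T}^n$, hence $C^\infty$ and in particular globally Lipschitz. Since each $A_k \neq 0$ and the $\omega_k$ are distinct, the exponents $2\pi\omega_k$ genuinely belong to $\Lambda(P)$, so Lemma \ref{FreqPeriodicity} applies and shows that $h$ is strictly $1$-periodic.

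The first substantive step is to invoke item (1) of Theorem \ref{HullDimensionsTh}, which needs nothing beyond strict $1$-periodicity, to obtain $\dim_{T}\mathcal{H}(P)=n$. Combined with the universal chain $\dim_{T}(X)\leq\ldim(X)$ recorded in the preliminaries, this gives $\ldim(\mathcal{H}(P))\geq n$. The second step is to control the modulus of continuity: the Lipschitz bound $|P(t)-P(s)|_{E}\leq L|t-s|$ lets me take $\delta(\varepsilon)=\varepsilon/L$, whence $\liminf_{\varepsilon\to 0+}\frac{\ln 1/\delta(\varepsilon)}{\ln 1/\varepsilon}=1$. Plugging both facts into inequality (\ref{LowerDimEstimate}) yields $n\leq\ldim(\mathcal{H}(P))\leq\mathfrak{di}(P)+1$, that is, $\mathfrak{di}(P)\geq n-1$, as desired.

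The main point to get right, and the reason I would \emph{not} simply quote the preceding corollary with $\alpha=1$, is that its reverse H\"older hypothesis (item (3) of Theorem \ref{HullDimensionsTh}) can fail here. Indeed, the differential of $h$ at $\theta_0$ is the real-linear map $v\mapsto 2\pi i\sum_{k}A_k e^{i2\pi\theta_{0,k}}v_k$ from $\mathbb{R}^n$ into $E$; if the $A_k$ are linearly dependent (for instance several coincide, or $n$ exceeds the real dimension of their span, as with $E=\mathbb{C}$ and $n\geq 3$) then this map has a nontrivial kernel at every $\theta_0$, so no reverse H\"older estimate with exponent $1$ can hold. The resolution is precisely that item (3) is never needed: the bound $\ldim(\mathcal{H}(P))\geq n$ is forced by the purely topological item (1). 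Here it is essential to remember that $\mathcal{H}(P)$ is computed inside the function space $C_{b}(\mathbb{R};E)$, where it is homeomorphic to $\mathbb{T}^n$, and not to confuse it with the image $Cl(P(\mathbb{R}))\subset E$, which may genuinely carry smaller dimension when the $A_k$ are dependent. I expect this distinction between the hull and the image, together with the choice of the topological rather than the H\"older route to $\ldim\geq n$, to be the only real subtlety; the remaining estimates are routine.
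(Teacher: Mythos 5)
Your proof is correct, but it takes a genuinely different route from the paper's. The paper proves this corollary by showing directly that the metric $\rho'$ of (\ref{inducedtorusmetric}) induced on $\mathbb{T}^n$ by the hull is uniformly equivalent to $\rho_{\mathbb{T}^n}$: a compactness-and-contradiction argument, combined with the uniqueness theorem for almost periodic functions, extracts a global reverse Lipschitz bound $\rho'(\theta',\theta'')\geq C|\theta'-\theta''|_{\mathbb{T}^n}$ even though --- exactly as you observe --- the pointwise reverse H\"older condition of item (3) of Theorem \ref{HullDimensionsTh} can fail with $\alpha=1$ when the $A_k$ are linearly dependent over $\mathbb{R}$; the supremum over translates in (\ref{FinalInducedMetric}) is what rescues the estimate there. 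That gives $\ldim(\mathcal{H}(P))\geq n$ and the conclusion then follows by the mechanism of the preceding corollary. You instead obtain $\ldim(\mathcal{H}(P))\geq n$ for free from item (1) of Theorem \ref{HullDimensionsTh} (strict $1$-periodicity via Lemma \ref{FreqPeriodicity}, hence $\dim_{T}\mathcal{H}(P)=n$) together with the chain $\dim_{T}(X)\leq\ldim(X)$ recorded in the preliminaries, and finish identically with the Lipschitz modulus of continuity in (\ref{LowerDimEstimate}). Your argument is shorter and correctly isolates the precise obstruction to quoting the previous corollary with $\alpha=1$; what it does not deliver is the quantitative content of the paper's proof, namely the bi-Lipschitz identification of $(\mathcal{H}(P),\rho')$ with the flat torus (which pins down $\dim_{F}(\mathcal{H}(P))=n$ exactly and carries an explicit constant relevant to the effective Kronecker-type questions discussed in Section 5). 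Both roads are sound, and your distinction between the hull in $C_{b}(\mathbb{R};E)$ and the image $Cl(P(\mathbb{R}))\subset E$ is exactly the right point to insist on.
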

\begin{proof}
We will show that the metric $\rho'$ given by (\ref{inducedtorusmetric}) is stronger than $|.|_{\mathbb{T}^n}$ (and, consequently, because the inverse is obvious, they are uniformly equivalent). Supposing the opposite, we get two sequences $\theta^{(s)},\widetilde{\theta}^{(s)} \in \mathbb{T}^n$, $s=1,2,\ldots$ such that
\begin{equation}
\label{DD-Polynomial-opposite}
\frac{\rho'\left(\theta^{(s)},\widetilde{\theta}^{(s)}\right)}{|\theta^{(s)} - \widetilde{\theta}^{(s)}|_{\mathbb{T}^n}} \leq \frac{1}{s}, \text{ for all } s=1,2\ldots.
\end{equation}
By the compactness of $\mathbb{T}^{n}$, we can assume that $\theta^{(s)}$ and $\widetilde{\theta}^{(s)}$ are convergent (to the same limit) and the fact that both metrics in (\ref{DD-Polynomial-opposite}) are translation invariant allows us to suppose that $\theta^{(s)}$ and $\widetilde{\theta}^{(s)}$ tend to zero. Now we fix $\theta_{0} \in \mathbb{T}^{n}$, let $\hat{\theta}^{(s)}:=\theta^{(s)} - \widetilde{\theta}^{(s)}$ and use the translation invariance again. From (\ref{DD-Polynomial-opposite}) we have
\begin{equation}
\frac{\rho'\left(\theta_{0},\theta_{0} + \hat{\theta}^{(s)}\right)}{|\hat{\theta}^{(s)}|_{\mathbb{T}^{n}}} \leq \frac{1}{s},  \text{ for all } s=1,2\ldots.
\end{equation}
One more observation is that we may assume that $|\hat{\theta}^{(s)}|_{\mathbb{T}^{n}} = |\hat{\theta}_{k_{0}}^{(s)}|_{\mathbb{T}^{1}}$ for some $1 \leq k_{0} \leq n$.

Now let $B_{1},\ldots,B_{m} \in E$ be a basis for the linear span of $A_{1},\ldots,A_{n}$ and
\begin{equation}
A_{k} = \sum\limits_{l=1}^{m}c^{(k)}_{l} B_{l}, 
\end{equation}
for some $c^{(k)}_{l} \in \mathbb{C}$. For the representing function of $P$ we have
\begin{equation}
h(t_{1},\ldots,t_{n}) = \sum\limits_{k=1}^{n}A_{k}e^{i2\pi t_{k}} = \sum\limits_{l=1}^{m} B_{l} P_{l}(t_{1},\ldots,t_{n}),
\end{equation}
where $P_{l}(t_{1},\ldots,t_{n}) = \sum\limits_{k=1}^{n}c^{(k)}_{l}e^{i2\pi t_{k}}$. Now we put $\theta_{0} = (\omega_{1} t_{0},\ldots,\omega_{n}t_{0})$ for some $t_0 \in \mathbb{R}$. From (\ref{FinalInducedMetric}) it is clear that $|h(\theta_{0})-h(\theta_{0} + \theta)|_{E} \leq \rho'(\theta_{0}, \theta_{0} + \theta)$, so
\begin{equation}
\label{DD-Polynomial-final-ineq}
\frac{|h(\theta_{0})-h(\theta_{0} + \hat{\theta}^{(s)})|_{E}}{|\hat{\theta}_{k_{0}}^{(s)}|_{\mathbb{T}^{1}}} \leq \frac{1}{s}, \text{ for all } s = 1,2\ldots.
\end{equation}
We may assume that the limit of $\frac{e^{i2\pi \hat{\theta}_{k}^{(s)}} - 1}{|\hat{\theta}_{k_{0}}^{(s)}|_{\mathbb{T}^{1}}}$ as $s \to +\infty$ exists and denote him by $\zeta_{k}$, for $k=1,\ldots,n$. Note that $|\zeta_{k}| \leq 2\pi$ and $\zeta_{k_{0}} = i 2\pi$. Thus, taking to the limit as $s \to +\infty$ in (\ref{DD-Polynomial-final-ineq}) we get
\begin{equation}
\sum\limits_{l=1}^{m}B_{l}\sum_{k=1}^{n}c^{(k)}_{l} \zeta_{k} e^{i2\pi \omega_{k} t_{0}} = 0.
\end{equation}
Using the linear independence of $B_{l}$'s we have $\sum_{k=1}^{n}c^{(k)}_{l} \zeta_{k} e^{i2\pi \omega_{k} t_{0}} = 0$, for $l = 1,\ldots, m$. Note that $t_{0}$ is chosen arbitrary, so by the uniqueness theorem for almost periodic functions the equality
\begin{equation}
 c^{(k)}_{l} \zeta_{k} = 0, \ \forall l=1,\ldots,m, \ \forall k=1,\ldots,n,
\end{equation}
must hold. We know that $\zeta_{k_{0}} = i 2\pi$ and, consequently, $c^{(k_{0})}_{l} = 0$ for all $l =1,\ldots, m$. The last is impossible because it was assumed in the initial statement that $A_{k_{0}} \not = 0$.
\end{proof}

\subsection{An upper estimate in the case of one irrational frequency}

We say, that a number $\omega \in \mathbb{R} \setminus \mathbb{Q}$ satisfies \textit{the Diophantine condition of order $\nu \geq 0$} if there is a constant $C>0$ such that for all natural $q$ and integer $p$ the inequality
\begin{equation}
	\left| \omega - \frac{p}{q} \right| \geq \frac{C}{q^{2+\nu}}.
\end{equation}
holds.

In the case of $\nu=0$ the number $\omega$ is called \textit{badly approximable}. Denote by $CD(\nu)$ the set of all irrational numbers satisfying the Diophantine condition of order $\nu$. The set $CD(\nu)$, where $\nu>0$, is a set of full Lebesgue measure (=its complement has measure zero). The set $CD(0)$, i.e. the set of badly approximable numbers, has measure zero (see \cite{Khinchin1997}), but it is still large enough (\cite{Schmidt1966}). The Diophantine condition can be expressed in terms of the continued fraction expansion. Here and further we use notations from subsection \ref{subsec: continuedfractions}.

\begin{proposition}
	\label{DCstatement}
	The following statements are equivalent.
	\begin{enumerate}
		\item $\omega \in CD(\nu)$;
		\item $q_{k+1}=O\left(q^{1+\nu}_{k}\right)$;
		\item $a_{k+1}=O\left(q^{\nu}_{k}\right)$;
	\end{enumerate}
\end{proposition}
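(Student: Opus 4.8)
The plan is to use statement (3) as a hub: I will establish $(2)\Leftrightarrow(3)$ directly from the recurrence, and then close the loop with $(1)\Rightarrow(3)$ and $(3)\Rightarrow(1)$, the last implication being where the real work lies. Throughout I read the $O(\cdot)$ statements as asymptotic, i.e. holding for all $k$ beyond some $k_0$.

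First, $(2)\Leftrightarrow(3)$ is essentially a restatement of the recurrence (\ref{ConvEq2}). Since $a_{k+1}\ge 1$ and hence $q_{k-1}\le q_k$ for $k\ge 1$, the identity $q_{k+1}=a_{k+1}q_k+q_{k-1}$ gives the two-sided bound $a_{k+1}q_k\le q_{k+1}\le (a_{k+1}+1)q_k$, so that $q_{k+1}/q_k$ and $a_{k+1}$ differ only by a bounded additive term. Dividing by $q_k$ shows that $q_{k+1}=O(q_k^{1+\nu})$ is equivalent to $q_{k+1}/q_k=O(q_k^{\nu})$, which in turn is equivalent to $a_{k+1}=O(q_k^{\nu})$; hence $(2)\Leftrightarrow(3)$.

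Next, $(1)\Rightarrow(3)$ follows by feeding the convergents into the Diophantine condition. If $\omega\in CD(\nu)$ with constant $C>0$, then applying the defining inequality to $p=p_k$, $q=q_k$ together with the upper estimate in (\ref{ConFracIneq}) gives
\[
\frac{C}{q_k^{2+\nu}}\le\left|\omega-\frac{p_k}{q_k}\right|<\frac{1}{a_{k+1}q_k^{2}},
\]
whence $a_{k+1}<C^{-1}q_k^{\nu}$, i.e. $a_{k+1}=O(q_k^\nu)$.

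Finally, $(3)\Rightarrow(1)$ is the main obstacle, because the Diophantine condition quantifies over \emph{all} rationals while (3) only controls the convergents. Here I would invoke the best approximation property recalled after (\ref{ConvEq2}): for any integer $p$ and any $q$ with $q_k\le q<q_{k+1}$ one has $|q\omega-p|\ge|q_k\omega-p_k|$. It suffices to prove the target inequality for $p/q$ in lowest terms, since the bound for the reduced representative (whose denominator is no larger) implies it for any other representation. Given such a fraction I would locate the unique $k$ with $q_k\le q<q_{k+1}$ and estimate, using the lower bound in (\ref{ConFracIneq}) in the form $|q_k\omega-p_k|>1/(q_{k+1}+q_k)$,
\[
\left|\omega-\frac{p}{q}\right|=\frac{|q\omega-p|}{q}\ge\frac{|q_k\omega-p_k|}{q}>\frac{1}{q(q_{k+1}+q_k)}.
\]
It then remains to bound $q_{k+1}+q_k$ by a multiple of $q^{1+\nu}$: from $(3)$, say $a_{k+1}\le M q_k^\nu$, together with $q_k\le q$ and $q_k^\nu\ge 1$, one gets $q_{k+1}+q_k\le(a_{k+1}+2)q_k\le(M+2)q_k^{1+\nu}\le(M+2)q^{1+\nu}$, which yields $|\omega-p/q|\ge C'/q^{2+\nu}$ with $C'=1/(M+2)$. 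The remaining care is purely technical: since the hypothesis $a_{k+1}=O(q_k^\nu)$ only holds for $k\ge k_0$, the finitely many small denominators $q<q_{k_0}$ must be handled separately, but for each such fixed $q$ one has $\inf_p|\omega-p/q|>0$ by irrationality, so shrinking $C'$ absorbs them into a single constant valid for all $q$. Combining $(1)\Rightarrow(3)$, $(3)\Rightarrow(1)$ and $(2)\Leftrightarrow(3)$ gives the full equivalence.
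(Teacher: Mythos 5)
Your proof is correct. The paper states this proposition without proof, treating it as a standard fact from the theory of continued fractions (cf.\ the references to Khinchin given in the preliminaries); your argument --- feeding the convergents into the Diophantine inequality for $(1)\Rightarrow(3)$, reading $(2)\Leftrightarrow(3)$ off the recurrence $q_{k+1}=a_{k+1}q_k+q_{k-1}$, and for $(3)\Rightarrow(1)$ combining the best-approximation property with reduction to lowest terms and absorbing the finitely many small denominators into the constant --- is precisely the standard proof the paper implicitly relies on.
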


In this subsection we present a generalization of the result of K. Naito from \cite{Naito1996}. We note, that he did some assumptions for the inverse frequency $\frac{1}{\omega}$, but it is more natural to do this for $\omega$ itself. We will show that it is equivalent to make such assumptions for $\omega$ or for $\frac{1}{\omega}$.

\begin{proposition}
	\label{InvertDCStatement}
	The following statements are equivalent.
	\begin{enumerate}
		\item $\omega \in CD(\nu)$;
		\item $\frac{1}{\omega} \in CD(\nu)$;
	\end{enumerate}
\end{proposition}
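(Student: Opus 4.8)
The plan is to reduce everything to the combinatorial description of the Diophantine condition in Proposition \ref{DCstatement} and to exploit the fact that the continued fraction expansion is almost invariant under $\omega \mapsto 1/\omega$. First I would dispose of the trivial symmetries. The set $CD(\nu)$ is invariant under $\omega \mapsto -\omega$, since $|-\omega - p/q| = |\omega - (-p)/q|$ and $-p$ ranges over $\mathbb{Z}$ together with $p$; so it suffices to treat $\omega>0$. Moreover, once the equivalence is proved for every irrational $\omega>1$, the case $\omega\in(0,1)$ follows immediately by applying it to $\eta:=1/\omega>1$, for which $1/\eta=\omega$. Hence I only need to establish $\omega\in CD(\nu)\Leftrightarrow 1/\omega\in CD(\nu)$ for a fixed irrational $\omega>1$.

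The key step is to work out the reciprocal expansion. For $\omega=[a_0;a_1,a_2,\ldots]$ with $a_0=\lfloor\omega\rfloor\geq 1$, I would check directly from the definition of the Gauss map in subsection \ref{subsec: continuedfractions} that $1/\omega=[0;a_0,a_1,a_2,\ldots]$. Writing $\{a_k'\}$ and $p_k'/q_k'$ for the partial quotients and convergents of $1/\omega$, this says $a_0'=0$ and $a_{k+1}'=a_k$ for all $k\geq 0$. Combining this with the identity $[0;a_0,\ldots,a_k]=1/[a_0;\ldots,a_k]=q_k/p_k$ and the coprimality of $p_k,q_k$ yields the convergent correspondence $q_{k+1}'=p_k$ (and $p_{k+1}'=q_k$), i.e. $q_j'=p_{j-1}$ for all $j\geq 0$.

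Now I would apply characterization $(3)$ of Proposition \ref{DCstatement} on both sides. For $1/\omega$ it reads $a_{j+1}'=O\bigl((q_j')^{\nu}\bigr)$, which after the substitutions $a_{j+1}'=a_j$ and $q_j'=p_{j-1}$ becomes the condition $a_j=O(p_{j-1}^{\nu})$; for $\omega$ it reads $a_j=O(q_{j-1}^{\nu})$. These two are equivalent because $p_k$ and $q_k$ are comparable up to constants: the same recurrence with $p_{-1}=1\geq 0=q_{-1}$ and $p_0=a_0\geq 1=q_0$ gives $q_k\leq p_k$ for all $k$, while $p_k/q_k\to\omega$ with all convergents positive forces $p_k/q_k\leq M$ for some finite $M$. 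Hence $q_{j-1}^{\nu}\leq p_{j-1}^{\nu}\leq M^{\nu}q_{j-1}^{\nu}$, so a big-$O$ bound in terms of one denominator is a big-$O$ bound in terms of the other, and the chain of equivalences closes.

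The hard part will be step two: nailing down the precise index bookkeeping for the reciprocal expansion (the leading zero partial quotient, the one-step shift $a_{k+1}'=a_k$, and the resulting identification $q_{k+1}'=p_k$), and verifying that the comparison $q_k\leq p_k\leq M q_k$ holds with a constant $M$ uniform in $k$ so that the two asymptotic conditions match exactly. Once these combinatorial facts are in place, the remainder is a routine translation through Proposition \ref{DCstatement}.
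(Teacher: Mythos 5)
Your proof is correct and follows essentially the same route as the paper: both rest on the observation that the continued fraction expansion of $1/\omega$ is a one-step shift of that of $\omega$ (the paper takes $\omega\in(0,1)$ and deletes the leading zero via the Gauss map, you take $\omega>1$ and insert one), and then both conclude via the combinatorial characterization in Proposition \ref{DCstatement}. You additionally spell out the convergent correspondence $q_{k+1}'=p_k$ and the uniform comparability of $p_k$ and $q_k$, which the paper's one-line appeal to Proposition \ref{DCstatement} leaves implicit; that bookkeeping is exactly what is needed to make the translation between the two $O(\cdot)$ conditions airtight.
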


\begin{proof}
	Without loss of generality we suppose that $\omega \in (0,1)$. Let $\widetilde{\omega} = \frac{1}{\omega}$. So, $$\widetilde{\omega}_0 = \widetilde{\omega} - \left\lfloor \widetilde{\omega} \right\rfloor = G (\omega) = G (\omega_0) = \omega_{1}.$$
	Thus, $\widetilde{\omega}_{k}=\omega_{k+1}$ and $\widetilde{\omega}=\frac{1}{\omega}=[a_1;a_2,a_3,\ldots,]$, where $a_k$ is the term of continued fraction expansion of $\omega$. By proposition \ref{DCstatement}, we have that $\frac{1}{\omega} \in CD(\nu)$.
\end{proof}

We say that $\omega$ has the \textit{$G$-property} if there is a constant $C_{\omega}>1$, such that $q_{k+1} \geq C_{\omega} q_{k}$ for all $k=1,2,\ldots$.

\begin{proposition}
	\label{DecayStatement}
	The following statements are equivalent.
	\begin{enumerate}
		\item $\omega$ has the $G$-property;
		\item $\liminf\limits_{k\to +\infty}\frac{q_{k+1}}{q_{k}}>1$;
		\item For the sequence of all indexes $j_{k}$, such that $a_{j_{k}}=1$, the sequence $\{a_{j_{k} - 1}\}$ is bounded;
	\end{enumerate}
\end{proposition}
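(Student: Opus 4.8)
The plan is to establish $(1)\Leftrightarrow(2)$ first and then $(2)\Leftrightarrow(3)$, basing everything on the recurrence (\ref{ConvEq2}), which I would rewrite as
\begin{equation}
\frac{q_{k+1}}{q_k} = a_{k+1} + \frac{q_{k-1}}{q_k}, \qquad k\geq 1.
\end{equation}
Since $a_{k+1}\geq 1$ and $q_{k-1}/q_k>0$ (because $q_{k-1}>0$ for $k\geq 1$), this identity already gives the crucial elementary fact $q_{k+1}/q_k>1$ for every $k\geq 1$, on which the whole statement rests.

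For $(1)\Rightarrow(2)$ I would simply note that $q_{k+1}\geq C_\omega q_k$ forces $q_{k+1}/q_k\geq C_\omega>1$, so the $\liminf$ is at least $C_\omega>1$. For $(2)\Rightarrow(1)$, put $L:=\liminf_k q_{k+1}/q_k>1$; then $q_{k+1}/q_k>\tfrac{1+L}{2}>1$ for all $k\geq N$ with some $N$, while each of the finitely many ratios $q_{k+1}/q_k$ with $1\leq k<N$ exceeds $1$ by the observation above. Taking $C_\omega$ to be the minimum of $\tfrac{1+L}{2}$ and these finitely many ratios yields a constant $>1$ with $q_{k+1}\geq C_\omega q_k$ for all $k$.

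For $(2)\Leftrightarrow(3)$ I would argue by contraposition, proving $\neg(2)\Leftrightarrow\neg(3)$. As every ratio exceeds $1$, negating $(2)$ means precisely $\liminf_k q_{k+1}/q_k=1$, i.e. there is a subsequence along which $q_{k+1}/q_k\to 1$. The engine of the equivalence is a two-sided estimate for the correction term: writing $q_{k-1}/q_k=\big(a_k+q_{k-2}/q_{k-1}\big)^{-1}$ and using $0\leq q_{k-2}/q_{k-1}\leq 1$ gives
\begin{equation}
\frac{1}{a_k+1} \leq \frac{q_{k-1}}{q_k} \leq \frac{1}{a_k}, \qquad k\geq 2,
\end{equation}
so that $q_{k-1}/q_k\to 0$ along a subsequence if and only if $a_k\to\infty$ along it. Setting $j=k+1$ in the first identity, this shows that $q_j/q_{j-1}\to 1$ along a subsequence if and only if $a_j=1$ eventually along it (otherwise $a_j\geq 2$ keeps the ratio $\geq 2$) and $a_{j-1}\to\infty$ along it. Hence $\liminf q_{k+1}/q_k=1$ is equivalent to the existence of a sequence of indices $j$ with $a_j=1$ and $a_{j-1}\to\infty$, which is exactly the unboundedness of $\{a_{j_k-1}\}$, that is, $\neg(3)$.

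The main obstacle I anticipate is bookkeeping with indices rather than any hard estimate: one must pass to the subsequence on which $a_{k+1}=1$ (discarding terms with $a_{k+1}\geq 2$, which stay bounded away from $1$), translate correctly between the index $k$ in the ratio and the index $j=k+1$ appearing in $(3)$, and treat separately the harmless case where $a_j=1$ only finitely often (there $\{a_{j_k-1}\}$ is a finite, hence bounded, set, and simultaneously $q_{k+1}/q_k\geq 2$ for large $k$, so $(2)$ and $(3)$ both hold). The low-index terms $q_{-2},\dots,q_1$ also warrant a quick separate check so that the estimate holds over the stated range of $k$.
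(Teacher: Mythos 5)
Your proof is correct and follows essentially the same route as the paper: both rest on rewriting the recurrence as $q_{k+1}/q_k = a_{k+1} + q_{k-1}/q_k$ and observing that the ratio approaches $1$ along a subsequence exactly when $a_{k+1}=1$ and $a_k\to\infty$ there. The only difference is presentational — the paper expands $q_{k-1}/q_k$ as the finite continued fraction $[0;a_k,\dots,a_1]$ and declares the conclusion clear, whereas you make the same point quantitative via the two-sided bound $\tfrac{1}{a_k+1}\leq q_{k-1}/q_k\leq \tfrac{1}{a_k}$, and you also spell out the $(1)\Leftrightarrow(2)$ step that the paper calls obvious.
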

\begin{proof}
	The equivalence of $(1)$ and $(2)$ is obvious. Let $b_{k}=\frac{q_{k+1}}{q_{k}}$. Then, using (\ref{ConvEq2}), we have the equality
	\begin{equation}
		\label{geometricMonotonneLemma}
		b_{k}=a_{k+1} + \frac{q_{k-1}}{q_{k}}=a_{k+1}+\frac{1}{a_{k}+\frac{q_{k-2}}{q_{k-1}}}=a_{k+1} + \cfrac{1}{a_{k} + \cfrac{1}{a_{k-1} + \cfrac{1}{ \ddots + \cfrac{1}{a_1} }}}.
	\end{equation}
	From (\ref{geometricMonotonneLemma}) it is clear that if $a_{k+1}=1$ and $a_{k}$ is large enough then $b_k$ is close to 1 and vice versa. Thus, the statement is proved.
\end{proof}

An immediate corollary of proposition \ref{DecayStatement} and the proof of proposition \ref{InvertDCStatement} is the following. 

\begin{corollary}
	\label{KpropCorollary}
	An irrational number $\omega$ has the $G$-property if and only if $\frac{1}{\omega}$ has the $G$-property.
\end{corollary}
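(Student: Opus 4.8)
The plan is to reduce everything to the sequence of partial quotients and then invoke the combinatorial characterization, item (3), of Proposition \ref{DecayStatement}, which is phrased purely in terms of $\{a_k\}$ and is therefore insensitive to a shift of the index.

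First I would reduce to the case $\omega \in (0,1)$. The assertion is symmetric under $\omega \mapsto 1/\omega$, and for irrational $\omega>0$ exactly one of $\omega, 1/\omega$ lies in $(0,1)$; we may assume $\omega>0$, since replacing $\omega$ by $-\omega$ alters only $a_0$ and leaves the denominators $q_k$ ($k\geq 1$), hence the $G$-property, unchanged (recall from \eqref{ConvEq2} together with $q_0=1$ that the $q_k$ do not depend on $a_0$). Thus, exactly as in the proof of Proposition \ref{InvertDCStatement}, I may take $\omega=[0;a_1,a_2,a_3,\ldots]$ and then $\tfrac{1}{\omega}=[a_1;a_2,a_3,\ldots]$. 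Writing $b_m$ for the partial quotients of $1/\omega$, this says $b_m=a_{m+1}$ for all $m\geq 0$: the two continued fractions carry the same sequence of partial quotients, up to a shift of the index by one.

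Next I would apply item (3) of Proposition \ref{DecayStatement} to each number. For $\omega$, let $\{j_k\}$ enumerate the indices with $a_{j_k}=1$; then $\omega$ has the $G$-property iff $\{a_{j_k-1}\}$ is bounded. For $1/\omega$ the indices $m$ with $b_m=1$ are exactly $m=j-1$ with $a_j=1$, and the associated preceding term is $b_{m-1}=a_m=a_{j-1}$. Hence the sequence of preceding partial quotients controlling the $G$-property of $1/\omega$ coincides with $\{a_{j_k-1}\}$, except that the single index $j=1$ (which occurs precisely when $a_1=1$) has no predecessor $b_{-1}$ and is simply omitted. Since boundedness of a sequence is unaffected by deleting finitely many terms, one sequence is bounded iff the other is, and the equivalence follows.

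I do not anticipate a genuine obstacle: the entire content sits in the reindexing identity $b_m=a_{m+1}$ supplied by the proof of Proposition \ref{InvertDCStatement}. The only points requiring care are the justification of the reduction to $\omega\in(0,1)$ (using that the $q_k$ ignore $a_0$) and the boundary bookkeeping at the start of the expansion, namely the at-most-one exceptional index $j=1$, which is harmless because item (3) is a tail condition. As an alternative one could instead push item (2) through the relation $\tilde q_m=p_{m+1}$ between the convergents of $1/\omega$ and the numerators of $\omega$, using $p_k/q_k\to\omega\neq 0$ to identify $\liminf_k p_{k+1}/p_k$ with $\liminf_k q_{k+1}/q_k$; the partial-quotient route above is the cleaner of the two.
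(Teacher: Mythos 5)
Your argument is essentially the paper's: the paper declares the corollary ``immediate'' from Proposition \ref{DecayStatement} together with the shift identity $\frac{1}{\omega}=[a_1;a_2,a_3,\ldots]$ established in the proof of Proposition \ref{InvertDCStatement}, and your proof simply spells out that item (3) of Proposition \ref{DecayStatement} is a tail condition on the partial quotients, hence invariant under the index shift $b_m=a_{m+1}$ (with correct bookkeeping of the one exceptional index). That is the intended route and it is sound.

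One claim in your reduction to $\omega>0$ is false as stated: replacing $\omega$ by $-\omega$ does \emph{not} merely alter $a_0$. Under the paper's floor-based algorithm the tail of the expansion is determined by the fractional part, and $\{-\omega\}=1-\{\omega\}$, so the partial quotients change (e.g.\ $\sqrt2=[1;2,2,2,\ldots]$ while $-\sqrt2=[-2;1,1,2,2,2,\ldots]$). The conclusion you want is still true --- passing from $x$ to $1-x$ only modifies a bounded prefix of the partial quotient sequence, so the $G$-property survives --- but it needs that small extra computation rather than the stated justification. Since the paper itself tacitly works with positive frequencies and its own proof of Proposition \ref{InvertDCStatement} begins with the same unexplained ``without loss of generality $\omega\in(0,1)$,'' this is a peripheral slip, not a defect in the main argument.
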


Now we are ready to prove an upper estimate of the Diophantine dimension of an $E$-quasiperiodic function $u(t)=h(\omega t, t)$, where $h$ satisfies\footnote{Here $C_{h},\varepsilon_{0}>0$ and $\alpha_{1},\alpha_{2} \in (0,1]$ are constants.}
\begin{equation}
\label{h:cond1}
|h(t_{1},s)-h(t_{2},s)|_{E} \leq C_{h} |t_{1} - t_{2}|^{\alpha_{1}}, \ \text{для всех $t_{1},t_{2},s \in \mathbb{R}$, \ $|t_{1}-t_{2}| \leq \varepsilon_{0}$},
\end{equation}
\begin{equation}
\label{h:cond2}
|h(t,s_{1})-h(t,s_{2})|_{E} \leq C_{h} |s_{1} - s_{2}|^{\alpha_{2}}, \ \text{для всех $t,s_{1},s_{2} \in \mathbb{R}$, \ $|s_{1}-s_{2}|\leq \varepsilon_{0}$}.
\end{equation}
This is a generalization of theorem 3 in \cite{Naito1996}. Since the main method remains the same we give a shortened proof.

\begin{theorem}
	\label{KnaitoOnefreqEstTheorem}
	Let $u=h(\omega t, t)$ be a quasiperiodic function, where $h$ satisfies (\ref{h:cond1}) and (\ref{h:cond2}). Suppose that $\omega \in \mathbb{R} \setminus \mathbb{Q}$ has the $G$-property and satisfies the Diophantine condition of order $\nu$. Let $\alpha=\max\{\alpha_1,\alpha_2\}$; then for some constant $K>0$ and all sufficiently small $\varepsilon>0$ the inequality
	\begin{equation}
		l_{u}(\varepsilon) \leq K \left(\frac{1}{\varepsilon}\right)^{\frac{1+\nu}{\alpha}}.
	\end{equation}
	holds. In particular, $\mathfrak{Di}(u) \leq \frac{1+\nu}{\alpha}$.
\end{theorem}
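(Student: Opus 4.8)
The plan is to estimate the inclusion length $l_u(\varepsilon)$ directly, by producing an $\varepsilon$-almost period inside every interval of length $K(1/\varepsilon)^{(1+\nu)/\alpha}$. The starting observation is a sufficient condition for $\tau$ to be an $\varepsilon$-almost period. Since $h$ is $1$-periodic in each variable, writing $\omega\tau\equiv r_1$ and $\tau\equiv r_2\pmod 1$ with $|r_1|=|\omega\tau|_{\mathbb{T}^1}$ and $|r_2|=|\tau|_{\mathbb{T}^1}$ gives $h(\omega t+\omega\tau,t+\tau)=h(\omega t+r_1,t+r_2)$, and inserting the intermediate point $h(\omega t+r_1,t)$ and applying (\ref{h:cond1}) and (\ref{h:cond2}) yields, for all $t$ and provided $|r_1|,|r_2|\le\varepsilon_0$,
\begin{equation*}
|u(t+\tau)-u(t)|_E\le C_h|\omega\tau|_{\mathbb{T}^1}^{\alpha_1}+C_h|\tau|_{\mathbb{T}^1}^{\alpha_2}.
\end{equation*}
Hence $\tau$ is an $\varepsilon$-almost period as soon as $|\omega\tau|_{\mathbb{T}^1}\le\delta_1:=(\varepsilon/2C_h)^{1/\alpha_1}$ and $|\tau|_{\mathbb{T}^1}\le\delta_2:=(\varepsilon/2C_h)^{1/\alpha_2}$, and the whole problem reduces to producing such $\tau$ relatively densely and controlling the gaps.

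This is where the maximum in $\alpha$ enters. I would first treat the case $\alpha=\alpha_1\ge\alpha_2$, so that $\delta_2\le\delta_1=:\delta$; then it is enough to look for \emph{integer} $\tau=n$, because $|n|_{\mathbb{T}^1}=0$ and only $|n\omega|_{\mathbb{T}^1}\le\delta$ remains to be arranged. Thus everything reduces to a one-dimensional statement: the return set $R_\delta:=\{n\in\mathbb{Z}:|n\omega|_{\mathbb{T}^1}\le\delta\}$ is relatively dense with gaps $O(\delta^{-(1+\nu)})$. In the opposite case $\alpha_2>\alpha_1$ I would swap the two variables, replacing $\omega$ by $1/\omega$ — which by Proposition \ref{InvertDCStatement} and Corollary \ref{KpropCorollary} again has the $G$-property and Diophantine order $\nu$ — and take $\tau=m/\omega$, so that $|\omega\tau|_{\mathbb{T}^1}=0$ and the binding constraint becomes $|m/\omega|_{\mathbb{T}^1}\le\delta_2$; the estimate below then applies verbatim to $1/\omega$, the gaps in $\tau$ being those in $m$ scaled by $1/\omega$.

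For the return estimate I would choose $k$ to be the least index with $\beta_k:=|q_k\omega|_{\mathbb{T}^1}\le\delta$, so that $\beta_{k-1}>\delta$. Adding $q_k$ or $q_{k+1}$ to $n$ shifts $\{n\omega\}$ by the two numbers $(-1)^k\beta_k$ and $(-1)^{k+1}\beta_{k+1}$, which have opposite signs and magnitudes $\beta_{k+1}<\beta_k\le\delta$. Starting from $n=0$ and repeatedly adding $q_k$ as long as the position stays in $(-\delta,\delta)$ and adding $q_{k+1}$ otherwise (and symmetrically in the negative direction), I obtain a two-sided sequence of elements of $R_\delta$ whose consecutive differences are always $q_k$ or $q_{k+1}$; hence the gaps of $R_\delta$ are at most $q_{k+1}$. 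To convert this into a power of $1/\delta$, I note that (\ref{ConFracIneq}) at index $k-1$ gives $\delta<\beta_{k-1}<1/q_k$, whence $q_k<1/\delta$, while the Diophantine condition of order $\nu$ gives $q_{k+1}\le Cq_k^{1+\nu}$ by Proposition \ref{DCstatement}. Combining, the gaps of $R_\delta$ — and therefore $l_u(\varepsilon)$ — are at most $Cq_k^{1+\nu}<C\delta^{-(1+\nu)}=K(1/\varepsilon)^{(1+\nu)/\alpha}$, which is exactly the claim; passing to the $\limsup$ gives $\mathfrak{Di}(u)\le(1+\nu)/\alpha$.

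The main obstacle is the return estimate of the previous paragraph, namely establishing that the returns of the rotation by $\omega$ to the shrinking interval $(-\delta,\delta)$ are relatively dense with gaps of the sharp order $q_{k+1}$ rather than something larger (naive use of multiples of a single convergent $q_k$ only produces gaps of order $q_kq_{k+1}$). The two-generator walk above is what keeps the gaps down, and the $G$-property, in the form of the at-least-geometric growth of the $q_k$ (Proposition \ref{DecayStatement}), is what guarantees that $q_k$ and the correcting generator $q_{k+1}$ sit at genuinely separated scales so that this walk behaves regularly; this is the form in which the argument of \cite{Naito1996} is organized. The exponent $1+\nu$ itself, however, comes entirely from the Diophantine condition through $q_{k+1}\le Cq_k^{1+\nu}$.
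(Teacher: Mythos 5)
Your proof is correct, and its skeleton is the same as the paper's: produce almost periods from the convergents of $\omega$ (respectively of $1/\omega$, invoking Proposition \ref{InvertDCStatement} to transfer the arithmetic hypotheses), bound the resulting inclusion length by $q_{k+1}$ up to constants, and convert $q_{k+1}\leq C_{\nu}q_k^{1+\nu}$ into the power $(1+\nu)/\alpha$ via the choice of $k$ adapted to $\delta$. The genuine difference is in the one step the paper does not spell out. The paper's proof is a pointer to Naito's argument: it asserts that $L(\varepsilon_k)=q_{k+1}\widetilde{\omega}$ is an inclusion length for $\varepsilon_k$-almost periods with $\varepsilon_k=\frac{C_h}{1+C_{\widetilde{\omega}}^{-\alpha_2}}\left(\frac{1}{q_{k+1}}\right)^{\alpha_2}$, a constant that visibly involves the $G$-property constant $C_{\widetilde{\omega}}$, and its only original contribution is the correction $L(\varepsilon):=L(\varepsilon_{k_0+1})$ rather than $L(\varepsilon_{k_0})$ (which is exactly where the Diophantine condition, i.e. your inequality $q_{k_0+2}\leq C_\nu q_{k_0+1}^{1+\nu}$, enters). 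You instead prove the key return-time lemma from scratch: the two-generator walk with steps $q_k$ and $q_{k+1}$, whose shifts $(-1)^k\beta_k$ and $(-1)^{k+1}\beta_{k+1}$ have opposite signs and magnitudes at most $\delta$, does show that one of the two moves always keeps the orbit in $[-\delta,\delta]$, hence that $R_\delta$ has gaps at most $q_{k+1}$; combined with $\delta<\beta_{k-1}<1/q_k$ this is airtight. What this buys is a self-contained proof --- and, notably, one in which the $G$-property is never actually used: your closing paragraph asserts that the geometric growth of the $q_k$ is what makes the walk behave, but nothing in the walk's verification (always at least one admissible move, consecutive differences in $\{q_k,q_{k+1}\}$) requires $q_{k+1}/q_k$ to be bounded away from $1$. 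So either you should delete that remark, or, better, observe explicitly that your argument proves the theorem without the $G$-property hypothesis, which the paper's route (through the constant $C_{\widetilde{\omega}}$ in $\varepsilon_k$) does need.
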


\begin{proof}
	Let $\widetilde{\omega}=\frac{1}{\omega}$. As it follows from proposition $\ref{InvertDCStatement}$ we have that $\widetilde{\omega} \in CD(\nu)$ and $\widetilde{\omega}$ has the $G$-property. For the $k$-th convergent $\frac{p_k}{q_k}$ of $\widetilde{\omega}$ we have
	\begin{equation}
		\label{FreqDiophantineApprox}
		\left|\widetilde{\omega}-\frac{p_k}{q_k}\right| < \frac{1}{q_{k+1}q_k}.
	\end{equation}
Following the proof of theorem 3 in \cite{Naito1996}, we can show that the value $L(\varepsilon_k)=q_{k+1}\widetilde{\omega}$ can be considered as an inclusion length for $\varepsilon_{k}$-almost periods, where
	\begin{equation}
		\label{NaitoEstimateEpsilonDef}
		\varepsilon_{k}=\frac{C_{h}}{1+C_{\widetilde{\omega}}^{-\alpha_2}} \left(\frac{1}{q_{k+1}}\right)^{\alpha_2}.
	\end{equation}
Now for all sufficiently small $\varepsilon>0$ let $k_{0}$ be such that $\varepsilon_{k_{0}+1} \leq \varepsilon < \varepsilon_{k_{0}}$. Then we can consider the value $L(\varepsilon) := L(\varepsilon_{k_{0}+1}) = q_{k_0+2}\widetilde{\omega}$ as an inclusion length for $\varepsilon$-almost periods\footnote{At this place there is a mistake in the proof of theorem 3 in \cite{Naito1996}. Here K. Naito puts $L(\varepsilon) := L(\varepsilon_{k_{0}})$, but this is obviously wrong. Because of such a mistake he did not assume anything similar to the Diophantine condition of the frequency. However, he has treating the case of badly approximable numbers ($\nu=0$), so this mistake does not affect his results.}.
 Let $C_{\nu}$ be a constant such that $q_{k+1} \leq C_{\nu}q^{1+\nu}_{k}$ (see item (2) of proposition \ref{DCstatement}). Then for some $K>0$ we have
	\begin{equation}
		\label{NaitoFirstFinalEstimate}
		L(\varepsilon)=L(\varepsilon_{k_0+1})=q_{k_0+2}\widetilde{\omega} \leq C_{\nu}\widetilde{\omega} q^{1+\nu}_{k_0+1} \leq K \left(\frac{1}{\varepsilon_{k_0}}\right)^{\frac{1+\nu}{\alpha_2}} \leq K \left(\frac{1}{\varepsilon}\right)^{\frac{1+\nu}{\alpha_2}}.
	\end{equation}
Analogously, we can prove a similar estimate for the case of H\"{o}lder exponent $\alpha_{1}$.
\end{proof}

Note that an estimate for the $n$-frequency case, i.e. $u(t)=h(t,\omega_{1}t,\ldots,\omega_{n-1}t)$, can be proved (theorem 4 in \cite{Naito1996} treat the case of simultaneously badly approximable frequencies).  But, as we know, it is hard to determine the simultaneous Diophantine condition of order $\nu>0$ for a given numbers $1,\omega_{1},\ldots,\omega_{n-1}$ if $n \geq 3$.

One can compare theorem \ref{KnaitoOnefreqEstTheorem} to previous results. It follows, that for many quasiperiodic functions the lower bound of the Diophantine dimension given by the theorem \ref{HullDimensionsTh} is reached for the case of simultaneously badly approximable frequencies. In terms of subsection \ref{subs:Liouvillephenomenon}, the distribution of such a trajectory is well-approximated in a very short time-interval by corresponding part of the trajectory.
\section{Diophantine dimension for almost periodic solutions of non-linear evolution equations with strongly monotone operator}

Let $H$ be a real Hilbert space, $V$ is a reflexive real Banach space and $V^{*}$ is the dual to $V$. We suppose that
\begin{equation}
V \mathop{\subset}\limits_{i} H \mathop{\subset}\limits_{i*} V^{*},
\end{equation}
where the inclusions are dense and continuous. Let $\|i\|=\gamma$ and denote by $\langle v_1,v_2\rangle$ the dual pair for $v_1 \in V^{*}$ and $v_2 \in V$).

Suppose that for almost all $t \in \mathbb{R}$ the operator $A(t) \colon V \to V^{*}$ is bounded from $V$ to $V^{*}$ and satisfies \textit{strong monotonicity} condition, i.e. for some $M>0$ and $\alpha>1$  the inequality\footnote{In original work K. Naito treats only to the case $\alpha=2$. It is easy to generalize his method for $\alpha>1$.}
\begin{equation}
\label{strongmonotonicity}
\langle A(t)u - A(t)v,u-v \rangle \geq  M\|u-v\|^{\alpha}_{H}.
\end{equation}
holds for all $u,v \in V$.

Assume that there exists $u \in L^{\infty}(\mathbb{R};H) \cap L_{loc}^{2}(\mathbb{R};V)$, such that $u' \in L_{loc}^{2}(\mathbb{R};V^{*})$ and
\begin{equation}
\label{ODEinBanachSpace}
\frac{du}{dt} + A(t)u=f(t), \ \text{for almost all $t \in \mathbb{R}$}.
\end{equation}

\begin{theorem}[K. Naito, theorem 1 in \cite{Naito1982}]
	\label{KNaitoTheorem}
	Let $f \colon \mathbb{R} \to V^{*}$ be an $V^{*}$-almost periodic function and $\tau$ is an $\varepsilon$-almost period for $f$. Suppose also
	\begin{equation}
	\sup\limits_{t \in \mathbb{R}}\|A(t)u(t)-A(t+\tau)u(t)\|_{V^{*}} \leq \kappa.
	\end{equation}
	Then for some $C=C(\gamma,\alpha,M)>0$ the number $\tau$ is a $C(\varepsilon+\kappa)^{\frac{1}{\mu-1}}$-almost period\footnote{I.e. the inequality analogous to (\ref{almostperiod}) is satisfied.} of $u$, where $u$ is considered as a function from $\mathbb{R}$ to $H$.
\end{theorem}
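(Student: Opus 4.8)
The plan is to compare the solution with its $\tau$-shift at the level of the energy $\|u(t+\tau)-u(t)\|_H^2$ and to reduce everything to a scalar differential inequality whose only bounded-on-$\mathbb{R}$ solutions have size $(\varepsilon+\kappa)^{1/(\alpha-1)}$. First I would set $w(t):=u(t+\tau)-u(t)$. Since $u\in L^{\infty}(\mathbb{R};H)\cap L^{2}_{loc}(\mathbb{R};V)$ with $u'\in L^{2}_{loc}(\mathbb{R};V^{*})$, the same regularity holds for $w$, so by the standard Lions--Magenes lemma the scalar function $z(t):=\|w(t)\|_H^2$ is absolutely continuous and $\tfrac{1}{2}z'(t)=\langle w'(t),w(t)\rangle$ for almost all $t$. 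Writing (\ref{ODEinBanachSpace}) at $t$ and at $t+\tau$ and subtracting gives, in $V^{*}$, the relation $w'(t)+\big[A(t+\tau)u(t+\tau)-A(t)u(t)\big]=f(t+\tau)-f(t)$. The key algebraic step is to insert the intermediate term $A(t+\tau)u(t)$, splitting the bracket into the genuinely monotone part $A(t+\tau)u(t+\tau)-A(t+\tau)u(t)$ and the operator-oscillation part $A(t+\tau)u(t)-A(t)u(t)$, whose $V^{*}$-norm is at most $\kappa$ by hypothesis. Pairing with $w(t)$ and applying strong monotonicity (\ref{strongmonotonicity}) to the first part yields $\tfrac{1}{2}z'(t)+M\|w(t)\|_H^{\alpha}\le\langle g(t),w(t)\rangle$, where $g(t):=\big(f(t+\tau)-f(t)\big)-\big(A(t+\tau)u(t)-A(t)u(t)\big)$ satisfies $\|g(t)\|_{V^{*}}\le\varepsilon+\kappa$.

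The crux is to estimate the inhomogeneous term $\langle g(t),w(t)\rangle$ against the dissipation $M\|w\|_H^{\alpha}$. Using the duality bound together with the continuous embeddings $V\subset H\subset V^{*}$ (which is where the constant $\gamma$ enters) and a Young inequality with conjugate exponents $\alpha$ and $\tfrac{\alpha}{\alpha-1}$, I expect to absorb a fixed fraction of the dissipation and be left with a scalar differential inequality of the form $z'(t)+M\,z(t)^{\alpha/2}\le C_{1}(\varepsilon+\kappa)^{\alpha/(\alpha-1)}$, with $C_{1}=C_{1}(\gamma,\alpha,M)$. This is the \emph{main obstacle}: the pairing $\langle g,w\rangle$ naturally produces the $V$-norm of $w$, whereas the monotonicity only controls its $H$-norm, so reconciling the two norms (and thereby pinning down the exact dependence of $C_{1}$ on $\gamma$) is where the real work lies, rather than in the comparison argument that follows.

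Finally I would run a barrier argument for this inequality on the whole line. Let $z_{*}:=\left(\tfrac{C_{1}}{M}(\varepsilon+\kappa)^{\alpha/(\alpha-1)}\right)^{2/\alpha}$ be the equilibrium level; whenever $z(t)>z_{*}$ one has $z'(t)\le C_{1}(\varepsilon+\kappa)^{\alpha/(\alpha-1)}-M\,z(t)^{\alpha/2}<0$, and this bound is strictly negative uniformly as long as $z$ stays above any fixed value exceeding $z_{*}$. Hence, if $z(t_{0})>z_{*}$ for some $t_{0}$, integrating backward in time would force $z(t)\to+\infty$ as $t\to-\infty$, contradicting the a priori bound $\sup_{t}\|w(t)\|_H\le 2\|u\|_{L^{\infty}(\mathbb{R};H)}<\infty$. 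Therefore $\sup_{t\in\mathbb{R}}z(t)\le z_{*}$, that is, $\sup_{t}\|u(t+\tau)-u(t)\|_H\le z_{*}^{1/2}=C(\gamma,\alpha,M)\,(\varepsilon+\kappa)^{1/(\alpha-1)}$, which is exactly the assertion that $\tau$ is a $C(\varepsilon+\kappa)^{1/(\alpha-1)}$-almost period of $u$ regarded as a function into $H$.
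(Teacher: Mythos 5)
The paper does not actually prove this theorem---it is imported verbatim from Naito's 1982 paper and stated without proof---so there is no in-house argument to compare against; I will judge your proposal on its own terms. Your architecture (subtract the equation from its $\tau$-shift, insert the intermediate term $A(t+\tau)u(t)$ to split off a monotone difference plus an operator-oscillation term of $V^{*}$-norm at most $\kappa$, derive a scalar differential inequality for $z=\|w\|_{H}^{2}$ via the Lions--Magenes lemma, and close with a backward-in-time barrier argument using $u\in L^{\infty}(\mathbb{R};H)$) is the standard one and is almost certainly the skeleton of Naito's proof; the barrier step and the exponent bookkeeping at the end are correct.

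The gap is exactly the one you flag and then step over. From $\tfrac{1}{2}z'+M\|w\|_{H}^{\alpha}\le\langle g,w\rangle$ with $\|g(t)\|_{V^{*}}\le\varepsilon+\kappa$, the only general bound on the pairing is $\langle g,w\rangle\le\|g\|_{V^{*}}\|w\|_{V}$, and Young's inequality then produces a term $\delta\|w\|_{V}^{\alpha}$ which \emph{cannot} be absorbed into $M\|w\|_{H}^{\alpha}$: the embedding $V\subset H$ with $\|i\|=\gamma$ gives $\|w\|_{H}\le\gamma\|w\|_{V}$, i.e.\ the $V$-norm dominates the $H$-norm, so $\|w\|_{V}^{\alpha}$ is in general unboundedly larger than $\|w\|_{H}^{\alpha}$ (think of high-frequency elements of $H^{1}\subset L^{2}$); no choice of $\delta$ rescues this. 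The argument does close under either of two repaired readings: (a) the strong monotonicity (\ref{strongmonotonicity}) holds with the $V$-norm on the right-hand side, in which case Young absorbs $\delta\|w\|_{V}^{\alpha}$ into the dissipation and one converts back to $\|w\|_{H}$ only at the very end via $\|w\|_{H}\le\gamma\|w\|_{V}$ --- which is precisely how $\gamma$ would enter the constant $C(\gamma,\alpha,M)$, and which your version never actually uses; or (b) the increments $f(t+\tau)-f(t)$ and the operator oscillation are measured in $H$ rather than $V^{*}$, so that $\langle g,w\rangle\le(\varepsilon+\kappa)\|w\|_{H}$ and one gets $\tfrac{1}{2}z'+Mz^{\alpha/2}\le(\varepsilon+\kappa)z^{1/2}$ directly. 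As written against the hypotheses stated in the paper ($H$-norm monotonicity, $V^{*}$-norm data), your absorption step fails and the proof does not close; you need to decide which reading is intended and rebuild the estimate of $\langle g,w\rangle$ accordingly. (A minor point: the exponent in the statement is $\tfrac{1}{\mu-1}$ with $\mu$ undefined; you have tacitly, and reasonably, read $\mu$ as $\alpha$.)
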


\begin{corollary}
	\label{coroll:KNAITO}
	Within the assumptions of theorem \ref{KNaitoTheorem} suppose that $u \in C(\mathbb{R};H)$ and $A(t)$ is independent of $t$, i.e. $A(t) \equiv A$; then $u$ is $H$-almost periodic and
	\begin{equation}
	\mathfrak{Di}(u) \leq (\alpha-1)\mathfrak{Di}(f).
	\end{equation}
\end{corollary}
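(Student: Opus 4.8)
The plan is to read off the corollary directly from Theorem \ref{KNaitoTheorem}, the essential simplification being that $A(t)\equiv A$ is time-independent (here the exponent $\mu$ appearing in Theorem \ref{KNaitoTheorem} is the strong-monotonicity exponent $\alpha$). First I would note that for a time-independent operator the perturbation term collapses: $\sup_{t}\|A u(t)-A u(t)\|_{V^{*}}=0$, so one may take $\kappa=0$ in Theorem \ref{KNaitoTheorem}. Thus there is a constant $C=C(\gamma,\alpha,M)>0$ such that every $\varepsilon$-almost period $\tau$ of $f$ is automatically a $C\varepsilon^{\frac{1}{\alpha-1}}$-almost period of $u$, regarded as an $H$-valued function. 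This is the only analytic input, and it is supplied entirely by the strong-monotonicity estimate underlying the theorem.

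Next I would deduce the almost periodicity of $u$ together with the decisive inclusion-length bound. Given $\delta>0$, set $\varepsilon:=(\delta/C)^{\alpha-1}$, so that the previous step makes each $\varepsilon$-almost period of $f$ a $\delta$-almost period of $u$. Since $f$ is $V^{*}$-almost periodic, its $\varepsilon$-almost periods form a relatively dense set; hence so does the (larger) set of $\delta$-almost periods of $u$. As $\delta>0$ is arbitrary and $u$ is continuous by hypothesis, the very definition of uniform almost periodicity shows that $u$ is $H$-almost periodic, so that $l_u$ is well defined. Moreover, because the set of $\delta$-almost periods of $u$ contains the set of $\varepsilon$-almost periods of $f$, the minimal inclusion lengths obey
\[
l_{u}(\delta)\ \leq\ l_{f}\big((\delta/C)^{\alpha-1}\big)
\]
for all sufficiently small $\delta>0$.

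Finally I would pass to the Diophantine dimension by the same change-of-variables bookkeeping already used in the proof of Proposition \ref{GelderTransLemma}. Writing $\eta:=(\delta/C)^{\alpha-1}$, one has $\ln(1/\delta)=-\ln C+\tfrac{1}{\alpha-1}\ln(1/\eta)$, and substituting the inclusion-length bound into the definition of $\mathfrak{Di}(u)$ and changing the variable of the upper limit from $\delta$ to $\eta$ gives
\[
\mathfrak{Di}(u)=\limsup_{\delta\to 0+}\frac{\ln l_{u}(\delta)}{\ln(1/\delta)}\ \leq\ \limsup_{\eta\to 0+}\frac{\ln l_{f}(\eta)}{-\ln C+\tfrac{1}{\alpha-1}\ln(1/\eta)}.
\]
Since $\ln(1/\eta)\to+\infty$, the additive constant $-\ln C$ is asymptotically negligible, so the right-hand side equals $(\alpha-1)\limsup_{\eta\to0+}\frac{\ln l_{f}(\eta)}{\ln(1/\eta)}=(\alpha-1)\mathfrak{Di}(f)$, which is the claim.

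The argument is essentially bookkeeping once Theorem \ref{KNaitoTheorem} is in hand. The only point requiring genuine care is this last $\limsup$ manipulation: one must check that the multiplicative constant $C$ washes out and contributes nothing to the dimension, exactly as in Proposition \ref{GelderTransLemma}. No new difficulty beyond that arises, since the quantitative transfer of almost periods from $f$ to $u$, with the characteristic loss of the factor $1/(\alpha-1)$, is exactly what the strong-monotonicity bound provides.
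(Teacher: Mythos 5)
Your proposal is correct and follows exactly the intended deduction (the paper in fact states the corollary without writing out a proof): setting $\kappa=0$ for time-independent $A$, transferring almost periods of $f$ to $u$ via Theorem \ref{KNaitoTheorem} with the exponent $\tfrac{1}{\alpha-1}$, and then performing the same change-of-variables bookkeeping as in Proposition \ref{GelderTransLemma}. Your observation that the constant $C$ contributes only an additive $-\ln C$ in the denominator and hence washes out of the $\limsup$ is precisely the point that needs checking, and you handle it correctly.
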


Note that condition (\ref{strongmonotonicity}) is often used to prove the existence of bounded and almost periodic solutions to certain evolution equations (see \cite{Pankov2012,Pankov1984}). Besides theorem \ref{KNaitoTheorem}, the strong monotonicity condition is used to estimate other asymptotic properties of solutions, for example, the rate of decay (see \cite{Zauzua1988}).
\section{Some remarks}
Determining exact values or bounds of the Diophantine dimension of an almost periodic function one may face a problem when this value becomes infinite\footnote{Similar effects appears in the dimension theory for dynamical systems with multiple time (see \cite{AnikushinReitmann2016})}. This may happen if there are infinitely many rationally independent Fourier exponents or the exponents can be approximated extremely good. In these cases one can determine the following values
\begin{equation}
	\mathfrak{Di}(u,d):=\limsup_{\varepsilon \to 0+}\frac{\ln l_{u}(\varepsilon)}{\left(\ln 1 / \varepsilon\right)^{d}},
\end{equation}
where $d>0$, or choose a more suitable function in the denominator.

There are so called effective versions of the Kronecker theorem, where an upper estimate on the solution $t_{\varepsilon}$ of (\ref{KroneckerSystem}) is given (see \cite{Vorselen2010} and refs within). In this way corollary \ref{PolynomialLowerEstimate} may be interest in the number theory. The problem to find $\varepsilon$-almost periods of a polynomial $P(t)=\sum\limits_{k=1}^{n}A_{k} e^{i 2\pi \omega_{k} t}$ is equivalent (in some sense) to finding the solutions $\tau_{\delta}$ of the system
\begin{equation}
\label{effectiveKroneckerSys}
|\omega_{j}\tau| < \delta \pmod 1, \  j=1,2,\ldots,n,
\end{equation}
where $\delta$ is proportional to $\varepsilon$ and, consequently, the inclusion length is well defined for the set of solutions of (\ref{effectiveKroneckerSys}) and it has the same asymptotic as the inclusion length for almost periods of $P(t)$. It is always hard to get lower bounds (not only in this case, but for many problems of mathematics), so it seems interesting.

The shift operator defined on the hull $\mathcal{H}(u)$ of an almost periodic function $u$ defines an uniquely ergodic almost periodic dynamical system. The exponents of such a flow are the same as for $u$. Therefore, effects discussed in subsection \ref{subs:Liouvillephenomenon} may appear in this case.

\bibliographystyle{amsplain}

\end{document}